 \newtheorem{thm}{Theorem}[section]
 \newtheorem{lem}[thm]{Lemma}
 \newtheorem{prop}[thm]{Proposition}
 \theoremstyle{definition}
 \newtheorem{defn}[thm]{Definition}
 \theoremstyle{remark}
 \newtheorem{rem}[thm]{Remark}
 \numberwithin{equation}{section}
\newcommand{\arcangle}{%
  \mathord{<\mspace{-9mu}\mathrel{)}\mspace{2mu}}%
}
 \definecolor{DarkNavy}{RGB}{0,200,255}  
\definecolor{VibrantMagenta}{RGB}{255,0,127}  
\definecolor{LightGray}{RGB}{211,211,211}  
\definecolor{LightGray2}{RGB}{221,221,221}  
\definecolor{newGreen}{RGB}{85, 214, 69}  
\definecolor{LightBlue}{RGB}{173,225,255} 
\definecolor{lPurple}{RGB}{245, 190, 215}
\definecolor{uuuuuu}{RGB}{0, 0, 0}
\definecolor{neonPurple}{RGB}{177, 0, 253}
\begin{document}

% ---  ---  ---  ---  ---  ---  ---  ---  ---  ---  ---  ---  ---  ---  ---  ---  ---  ---  ---  ---  ---  ---  ---  --- -
% editorial commands: to be inserted by the editorial office
%
%\firstpage{1} \volume{228} \Copyrightyear{2004} \DOI{003-0001}
%
%
%\seriesextra{Just an add-on}
%\seriesextraline{This is the Concrete Title of this Book\br H.E. R and S.T.C. W, Eds.}
%
% for journals:
%
%\firstpage{1}
%\issuenumber{1}
%\Volumeandyear{1 (2004)}
%\Copyrightyear{2004}
%\DOI{003-xxxx-y}
%\Signet
%\commby{inhouse}
%\submitted{March 14, 2003}
%\received{March 16, 2000}
%\revised{June 1, 2000}
%\accepted{July 22, 2000}
%
%
%
% ---  ---  ---  ---  ---  ---  ---  ---  ---  ---  ---  ---  ---  ---  ---  ---  ---  ---  ---  ---  ---  ---  ---  ---  --- 
%Insert here the title, affiliations and abstract:
%

\title[A Family of Eight-Point Conics]
 {A Family of Eight-Point Conics Associated with the Cyclic Quadrilateral}

% ---  ---  --- -Author 2
\author{K. Chomicz}

\address{
Faculty of Mathematics and Computer Science, \\
Jagiellonian University, \\ ul. Łojasiewicza 6, 30-348
Kraków, Poland.}

\email{kazikchomicz@gmail.com}

\author{M. Płatek}
\address{Independent Researcher,\\
        Kraków, Poland.}

\email{milosz@platek.org}

\author{K. Smolira}
\address{
Faculty of Mathematics and Computer Science, \\
Jagiellonian University, \\ ul. Łojasiewicza 6, 30-348
Kraków, Poland.}

\email{konstanty.smolira@student.uj.edu.pl}

\author{D. Wyrzykowski}
\address{College of Inter-faculty Individual Studies in Mathematics and Natural Sciences,\\
University of Warsaw, \\
ul. Stefana Banacha 2C, 02-097 Warsaw, Poland. }
\email{d.wyrzykows2@student.uw.edu.pl}

% ---  ---  --- -classification, keywords, date
\subjclass{Primary 51M04; 51M05, 51M15}

\keywords{Euler line, isogonal conjugate, conic, cyclic quadrilateral, triangle center, Shinagawa coefficents.}

\date{ }
% ---  ---  --- -additions
%%%  ---  ---  ---  ---  ---  ---  ---  ---  ---  ---  ---  ---  ---  ---  ---  ---  ---  ---  ---  ---  ---  ---  --- -

\begin{abstract}
We consider the following configuration.
Let $ABCD$ be a cyclic quadrilateral with circumcenter $O$, and for each vertex $X$, let $H_X$ be the orthocenter of the triangle formed by the other three. Then $A,\;B,\;C,\;D,\;H_A,\;H_B,\;H_C,\;H_D$ all lie on a single conic. In this paper we study a certain generalization of this fact as follows. For an arbitrary point $P_D$ on the Euler line of $\triangle ABC$, we define corresponding points $P_A, P_B, P_C$ on the respective Euler lines such that the ratio $P_XH_X : P_XO$ is constant for all $X$. We show that the four vertices $A,B,C,D$ and the four isogonal conjugates $Q_A,\;Q_B\;,Q_C\;,Q_D$ of the points $P_X$ all lie on a single conic. This result is given distinct treatments, synthetic, projective, and algebraic. Furthermore, we situate the points $P_X$ within the list of triangle centers.
\end{abstract}

%%%  ---  ---  ---  ---  ---  ---  ---  ---  ---  ---  ---  ---  ---  ---  ---  ---  ---  ---  ---  ---  ---  ---  --- -
\maketitle
%%%  ---  ---  ---  ---  ---  ---  ---  ---  ---  ---  ---  ---  ---  ---  ---  ---  ---  ---  ---  ---  ---  ---  --- -
%\tableofcontents
\section{Introduction}

Consider a triangle $\triangle ABC$ with orthocenter $H$. It is well established that if one considers a rectangular hyperbola $\Phi$ passing through the vertices of $\triangle ABC$, then $H$ lies on $\Phi$ \cite{conics, yiu}. From this fact, it immediately follows that if one considers $D$ as the point of intersection of $\Phi$ with the circumcircle of $\triangle ABC$, the orthocenters of $\triangle ABD, \triangle ACD, \triangle BCD$ lie on $\Phi$ \cite{tienhoven}. We state this below.

\begin{prop}\label{orthocenters}
    Let $ABCD$ be a cyclic quadrilateral, and by $H_A$ denote the orthocenter of triangle $\triangle BCD$, similarly define $H_B, H_C, H_D$. Then, $A, B, C, D, \\ H_A, H_B, H_C, H_D$ all lie on a single conic. 
\end{prop}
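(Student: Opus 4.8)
The plan is to exhibit a single conic through all eight points, and the natural candidate is a \emph{rectangular hyperbola}. The engine of the whole argument is exactly the classical fact recalled in the introduction: a conic through the three vertices of a triangle is a rectangular hyperbola if and only if it also passes through the orthocenter of that triangle. I would take this as a black box and reduce the proposition to a bookkeeping exercise over the four triangles $\triangle BCD$, $\triangle ACD$, $\triangle ABD$, and $\triangle ABC$.

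First I would construct the conic. The conics through the four fixed points $A,B,C,D$ form a pencil, i.e.\ a one-parameter family $\lambda\,\mathcal{C}_1+\mu\,\mathcal{C}_2$, where one may take $\mathcal{C}_1$ and $\mathcal{C}_2$ to be the degenerate line-pair conics $AB\cup CD$ and $AC\cup BD$. Writing a conic as $ax^2+bxy+cy^2+dx+ey+f=0$, the condition of being rectangular is the single linear equation $a+c=0$ in its coefficients. Imposing this on the pencil gives one linear equation in $[\lambda:\mu]$, so generically it pins down a unique member $\Phi$; since a cyclic quadrilateral has no three vertices collinear (a line meets the circumcircle in at most two points), the four points are in general position and $\Phi$ is a genuine rectangular hyperbola. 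By construction $A,B,C,D\in\Phi$.

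With $\Phi$ in hand the rest is immediate. The hyperbola $\Phi$ passes through the three vertices $B,C,D$ of $\triangle BCD$, so by the quoted characterization it must pass through the orthocenter of $\triangle BCD$, which is precisely $H_A$. Applying the same reasoning to $\triangle ACD$, $\triangle ABD$, and $\triangle ABC$ places $H_B$, $H_C$, and $H_D$ on $\Phi$ as well. Hence $A,B,C,D,H_A,H_B,H_C,H_D$ all lie on the single conic $\Phi$.

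The mathematical content is therefore concentrated entirely in the rectangular-hyperbola lemma; once that is granted, the proposition follows by applying it four times. I expect the only genuine obstacle to be the careful treatment of existence, uniqueness, and non-degeneracy of $\Phi$: one must rule out the coincidences in which the rectangular member of the pencil degenerates into a pair of lines, and confirm that the cyclic hypothesis excludes these. An alternative that sidesteps the pencil count is to \emph{define} $\Phi$ as the rectangular hyperbola through $A,B,C$ that also passes through $D$ — the rectangular hyperbolas through three fixed points already form a pencil, each of whose members passes through $H_D$, so requiring passage through the fifth point $D$ selects $\Phi$ uniquely — and then to run the same four-triangle argument. This framing makes the link to the introductory remark most transparent.
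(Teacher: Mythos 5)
Your proposal is correct and follows essentially the same route as the paper, which treats this proposition as an immediate consequence of the rectangular-hyperbola/orthocenter fact: take the rectangular hyperbola through $A,B,C$ (necessarily containing $H_D$), note that $D$ is its fourth intersection with the circumcircle, and apply the same fact to the remaining three triangles. Your pencil argument merely makes explicit the existence and uniqueness of the rectangular member through $A,B,C,D$, which the paper leaves implicit.
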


This paper studies a generalization of this fact, extending it to a family of points defined on the Euler line. This generalization was first stated in \cite{wyrzyk}; however the proof offered was found to contain a fatal error, leading to the paper's retraction. This paper provides three distinct geometric treatments and establishes a connection to the modern theory of triangle centers \cite{kimberling, yiu, shinagawa}.

\textbf{Notation.} We work in the projective plane, denoting the line at infinity as $\ell_{\infty}$ and points on this line as $\infty_{X}$ or $\infty_k$ if we are refering to the infinity point of a line $k$. Moreover, for a given segment $AB$: $M_{AB}$ denotes its midpoint,  $|AB|$ denotes its Euclidean length. Furthermore, by $X(Y) \longmapsto X'(Y')$ we will denote a projectivity from $Y$ to $Y^{\prime}$ in which $X \in Y$ maps to $X^{\prime} \in Y'$, by $\mathcal{L}(X)$ we denote the set of lines passing through point $X$.

\section{Main Result}

\subsection{Preliminaries}

We recall a list of Lemmas.

\begin{lem}[Steiner's Theorem]\label{th2}
Given two pencils of lines, $P$ and $Q$, let $\phi$ be a projective map between them. Then:

\begin{enumerate}
\item If $\phi(PQ) = PQ$, the locus of points $x \cap \phi(x)$ for $x \in P$ forms a straight line $\Gamma$.
\item If $\phi(PQ) \neq PQ$, the locus of points $x \cap \phi(x)$ for $x \in P$ forms a conic $\Gamma$ that passes through points $P$ and $Q$.
\end{enumerate}
Moreover, the transformation
\begin{equation}
x(P) \longmapsto x \cap \phi(x)\;(\Gamma)
\end{equation}
is projective.
\end{lem}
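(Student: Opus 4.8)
The plan is to prove the statement analytically by parametrizing both pencils and reducing the locus to a single polynomial equation. First I would fix, inside $\mathcal{L}(P)$, two base lines with equations $L_0 = 0$ and $L_\infty = 0$, where $L_0, L_\infty$ are linear forms in homogeneous coordinates, so that every line through $P$ is $L_0 + t\,L_\infty = 0$ for a projective parameter $t$; similarly every line through $Q$ is written $M_0 + s\,M_\infty = 0$. Since $\phi$ is a projective map of pencils it preserves cross-ratio, hence it is expressed by a fractional-linear relation $s = \tfrac{at+b}{ct+d}$ with $ad-bc \neq 0$. The locus in question is the set of intersection points $\ell_t \cap \phi(\ell_t)$ as $t$ varies.

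Next I would eliminate the parameter. A point lies on $\ell_t$ and on $\phi(\ell_t)$ exactly when $t = -L_0/L_\infty$ and $s = -M_0/M_\infty$ satisfy the fractional-linear relation. Clearing denominators turns this into the single equation
\[
L_0\,(c\,M_0 + a\,M_\infty) \;-\; L_\infty\,(d\,M_0 + b\,M_\infty) \;=\; 0,
\]
which is homogeneous of degree two, so the locus lies on a conic $\Gamma$. Because $L_0$ and $L_\infty$ both vanish at $P$, and $M_0, M_\infty$ both vanish at $Q$, this quadratic form vanishes at $P$ and at $Q$; hence $\Gamma$ passes through both centers, as claimed in case (2).

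The crux is the dichotomy, and I expect the degeneracy analysis to be the main obstacle. I would show that the binary quadratic form above factors into two linear forms — equivalently, its associated symmetric $3\times 3$ matrix is singular — \emph{precisely} when $\phi(PQ) = PQ$. The geometric content is transparent: if $\phi$ fixes the common line $PQ$, then taking $\ell_t = PQ$ gives $\phi(\ell_t) = PQ$, so every point of $PQ$ belongs to the locus and $PQ$ splits off as a component, leaving the residual line $\Gamma$ of case (1); if instead $\phi(PQ) \neq PQ$, the form is irreducible and $\Gamma$ is a genuine conic. The care needed here is to make the factorization condition coordinate-free, i.e. independent of the chosen base lines, which I would do by comparing the parameter of $PQ$ in $\mathcal{L}(P)$ with its image under $\phi$.

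Finally, for the ``moreover'' clause I would invoke the intrinsic projective structure on a conic. Since any point $Z_t = \ell_t \cap \phi(\ell_t)$ lies on $\ell_t$, and $P$ also lies on $\ell_t$, the line $PZ_t$ is exactly $\ell_t$; therefore the cross-ratio of four locus points, measured on $\Gamma$ by projection from $P$, equals the cross-ratio of the corresponding four lines in $\mathcal{L}(P)$. As $t \mapsto \ell_t$ is itself projective and $\phi$ is projective, the composite $x \mapsto x \cap \phi(x)$ is projective onto $\Gamma$; in case (1) the same argument runs with cross-ratio measured along the line $\Gamma$.
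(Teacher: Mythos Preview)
The paper does not prove Lemma~\ref{th2} at all: it merely cites Coxeter's \emph{Projective Geometry} for the proof and the surrounding theory. So there is no in-paper argument against which to compare your approach.

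Your coordinate proof is correct and is in fact one of the standard routes to Steiner's theorem. The elimination step is accurate (the quadratic form $L_0(cM_0 + aM_\infty) - L_\infty(dM_0 + bM_\infty)$ is exactly what falls out), and the observation that both $P$ and $Q$ annihilate it is immediate. The only place where your write-up is a sketch rather than a proof is the ``precisely when'' in the degeneracy analysis: you argue convincingly that $\phi(PQ)=PQ$ forces the line $PQ$ to split off, but the converse---that the quadratic is irreducible whenever $\phi(PQ)\neq PQ$---deserves one explicit line. A clean way is to note that if the conic were reducible it would be a union of two lines through $P$ and through $Q$ respectively; one component would have to be $PQ$ itself (the only line through both), and then $PQ$ would be entirely contained in the locus, forcing $\phi(PQ)=PQ$. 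Your ``moreover'' argument via projection from $P$ is exactly the Steiner definition of the projective structure on a conic and is fine as stated.
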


For the proof of Lemma \ref{th2} and related theory see \cite{coxeter}. A direct  corollary of Steiner's Theorem is the following. 

\begin{lem}[Isogonal Conjugate of a Line]\label{th3}
Given a triangle $ABC$ and a line $\ell$, the locus of the isogonal conjugates of points on $\ell$ with respect to $\triangle ABC$ is:

\begin{enumerate}
\item a line $\Gamma$, if $\ell$ passes through any of the vertices of $\triangle ABC$;
\item a circumconic $\Gamma$ of $\triangle ABC$, otherwise.
\end{enumerate}
Moreover, the transformation
\begin{equation}
X(\ell) \longmapsto X'(\Gamma)
\end{equation}
is projective, where $X \in \ell$ and $X'$ is the isogonal conjugate of the point $X$ with respect to triangle $ABC$.

\end{lem}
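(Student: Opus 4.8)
The plan is to realize isogonal conjugation along $\ell$ as a composition of perspectivities and pencil-reflections, and then to invoke Steiner's Theorem (Lemma \ref{th2}). Recall that the isogonal conjugate $X'$ of $X$ is the common point of the cevians $AX, BX, CX$ after each is reflected across the internal bisector at the corresponding vertex; in particular $X'$ is already pinned down by the two reflected cevians through $A$ and $B$. Reflection across the bisector at a vertex fixes that vertex, so it induces an involution of the corresponding pencil, and since it is the restriction of a Euclidean isometry this involution is projective. I denote these bisector-reflections by $\rho_A$ on $\mathcal{L}(A)$ and $\rho_B$ on $\mathcal{L}(B)$.

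First I treat the generic case in which $\ell$ avoids all three vertices. I will build a projective map $\phi \colon \mathcal{L}(A) \to \mathcal{L}(B)$ carrying $AX'$ to $BX'$ as the four-fold composition
\[
\mathcal{L}(A) \xrightarrow{\ \rho_A\ } \mathcal{L}(A) \xrightarrow{\ \cap\,\ell\ } \ell \xrightarrow{\ \vee\, B\ } \mathcal{L}(B) \xrightarrow{\ \rho_B\ } \mathcal{L}(B),
\]
where the two middle arrows are the perspectivities $AX \mapsto AX\cap\ell = X$ and $X \mapsto BX$, both well defined since $A, B \notin \ell$. Each arrow is projective, hence so is $\phi$, and tracing the definitions gives $AX' \cap \phi(AX') = AX' \cap BX' = X'$. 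Because $X \mapsto AX \mapsto AX'$ is itself a projectivity $\ell \to \mathcal{L}(A)$, the line $AX'$ sweeps out the whole pencil $\mathcal{L}(A)$ as $X$ runs over $\ell$, so the locus $\Gamma = \{X'\}$ is precisely the Steiner locus of $\phi$.

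The decisive point is to check the dichotomy of Lemma \ref{th2}, namely whether $\phi$ fixes the line $AB$. Tracing $AB$ through the composition, $\rho_A$ sends $AB$ to $AC$, which meets $\ell$ in a point $X_0 = AC \cap \ell$; then $\phi(AB) = AB$ would force $BX_0 = \rho_B^{-1}(BA) = BC$, hence $X_0 = AC \cap BC = C$, i.e. $C \in \ell$ — excluded here. Therefore $\phi(AB) \neq AB$, and case (2) of Steiner's Theorem yields a conic $\Gamma$ through $A$ and $B$. Running the identical argument with the pair $(B,C)$ in place of $(A,B)$ produces the same point set and shows $C \in \Gamma$, so $\Gamma$ is a circumconic of $\triangle ABC$. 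The projectivity of $X \mapsto X'$ then follows by composing the projectivity $X \mapsto AX'$ with the map $AX' \mapsto X'$ furnished by the ``moreover'' clause of Lemma \ref{th2}.

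Finally, when $\ell$ passes through a vertex, say $A$, every $X \in \ell$ has $AX = \ell$, so $AX' = \rho_A(\ell)$ is a single fixed line $\ell^{*}$ and all conjugates lie on it; here the perspectivity $\mathcal{L}(A) \to \ell$ degenerates, which is exactly why case (1) rather than case (2) is in force. The induced correspondence is then $\ell \xrightarrow{\vee B} \mathcal{L}(B) \xrightarrow{\rho_B} \mathcal{L}(B) \xrightarrow{\cap\,\ell^{*}} \ell^{*}$, a composition of projectivities, so $\Gamma = \ell^{*}$ is a line and the map is projective. (If $\ell$ is a side, two reflected cevians become fixed and the locus collapses to the opposite vertex, the expected degeneration.) I expect the only genuine obstacle to be the bookkeeping of the Steiner alternative — confirming that $\phi(AB) = AB$ occurs exactly when $\ell$ meets a vertex — which the computation above settles.
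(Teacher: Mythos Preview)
Your argument is correct and is exactly the route the paper has in mind: the paper states this lemma as ``a direct corollary of Steiner's Theorem'' without spelling out the details, and your construction of the projectivity $\mathcal{L}(A)\to\mathcal{L}(B)$ via bisector-reflections and perspectivities, together with the check that $\phi(AB)=AB$ iff $C\in\ell$, is precisely how one makes that corollary explicit. The only part worth tightening is the closing parenthetical about $\ell$ being a side, where isogonal conjugation is genuinely undefined rather than merely degenerate; but this does not affect the statement as formulated.
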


\begin{rem}
    In the case of the Euler line, this circumconic is the Je\v{r}\'abek hyperbola \cite{kimberling}, whose center is $X_{125}$ in the ETC. 
\end{rem}

\begin{lem}\label{th4}
Let $\mathcal{H}$ be a rectangular hyperbola, and let $A,B,C$ be distinct points on it, and let $H$ denote the orthocenter of $\triangle ABC$. If $\mathcal{H}$ intersects the circumcircle of $\triangle ABC$ for the fourth time at $D$, then the midpoint of $HD$ is the center of $\mathcal{H}$.
\end{lem}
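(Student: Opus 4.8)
The plan is to reduce the whole statement to the standard parametrization of a rectangular hyperbola. Since $\mathcal{H}$ is rectangular, its asymptotes are perpendicular, so I would take them as the coordinate axes; then $\mathcal{H}$ has equation $xy = c^2$ with its center $Z$ at the origin, and its points can be written as $P(t) = (ct,\,c/t)$. Under this parametrization the central symmetry $v \mapsto -v$ acts by $P(t) \mapsto P(-t)$, so the claim ``the midpoint of $HD$ is the center'' becomes the single identity $D = -H$ (as position vectors), i.e. $P(t_4) = P(-t_H)$, where $t_H$ and $t_4$ are the parameters of $H$ and $D$. In other words, the lemma collapses to comparing two parameters.

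The two ingredients I need are both classical parametric facts. First, the slope of the chord $P(t_i)P(t_j)$ computes to $-1/(t_i t_j)$; hence the altitude from $P(t_1)$ has slope $t_2 t_3$, and solving two altitude equations (or simply verifying) shows that the orthocenter of $\triangle P(t_1)P(t_2)P(t_3)$ is the point with parameter
\[ t_H = -\frac{1}{t_1 t_2 t_3}. \]
In particular this re-derives $H \in \mathcal{H}$. Second, substituting $x = ct$, $y = c/t$ into a general circle $x^2 + y^2 + \alpha x + \beta y + \gamma = 0$ and clearing denominators yields the quartic
\[ c^2 t^4 + \alpha c\, t^3 + \gamma\, t^2 + \beta c\, t + c^2 = 0, \]
whose four roots are the parameters $t_1,t_2,t_3,t_4$ of the intersection points of $\mathcal{H}$ with the circumcircle. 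Comparing the constant and leading coefficients (both equal to $c^2$) gives $t_1 t_2 t_3 t_4 = 1$, hence $t_4 = 1/(t_1 t_2 t_3)$.

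Combining the two facts, $t_4 = 1/(t_1 t_2 t_3) = -t_H$, so $D = P(t_4) = P(-t_H) = -H$ as position vectors about $Z$. Therefore the center $Z$ is the midpoint of $H$ and $D$, which is exactly the assertion $M_{HD} = Z$.

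I expect the only real subtlety to be bookkeeping rather than a genuine obstacle: one must check that $t_4$ is distinct from $t_1,t_2,t_3$ so that $D$ is truly the \emph{fourth} intersection (this can fail only in degenerate configurations, such as when the circumcircle is tangent to $\mathcal{H}$), and that the leading coefficient $c^2 \neq 0$ so that all four roots are finite. A purely synthetic alternative is available --- the homothety centered at $H$ with ratio $1/2$ carries the circumcircle to the nine-point circle, so $M_{HD}$ lies on the nine-point circle, on which the center of a rectangular circumhyperbola is also known to lie --- but identifying these as the same point of that circle still seems to require the parametric identity above, so I would present the algebraic computation as the primary proof.
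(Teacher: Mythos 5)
Your proof is correct, but it takes a genuinely different route from the paper's. The paper argues projectively: writing $\infty_D$ for the isogonal conjugate of $D$ with respect to $\triangle ABC$, it takes the diameter $KL$ of the circumcircle through $O$ and $\infty_D$, notes the harmonic relation $(O,\infty_D;K,L)=-1$ (center and infinity point of a chord are harmonic with respect to its endpoints), and transfers it by isogonal conjugation --- which by Lemma~\ref{th3} is a projectivity from that line onto the circumconic $\mathcal{H}$ --- to obtain $(H,D;\infty_K,\infty_L)=-1$ on $\mathcal{H}$. This harmonic condition says line $HD$ passes through the pole of the line at infinity, i.e.\ through the intersection of the asymptotes, and central symmetry then forces that point to be the midpoint of $HD$. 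You instead place the asymptotes on the coordinate axes, parametrize $\mathcal{H}$ as $P(t)=(ct,c/t)$, and combine two Vieta-type identities: the orthocenter parameter $t_H=-1/(t_1t_2t_3)$ and the circle-intersection relation $t_1t_2t_3t_4=1$, giving $t_4=-t_H$, i.e.\ $D=-H$ about the center. Your computations check out (the chord slope $-1/(t_it_j)$, the quartic $c^2t^4+\alpha c\,t^3+\gamma t^2+\beta c\,t+c^2=0$, and the product of roots are all right), and the degeneracy worries you flag are harmless since the lemma's hypothesis already posits the fourth intersection. What your approach buys: it is self-contained and elementary, needing neither Steiner's theorem nor the projectivity of isogonal conjugation, and it re-derives $H\in\mathcal{H}$ along the way while making the antipodal relation $D=-H$ explicit. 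What the paper's approach buys: it is coordinate-free and reuses exactly the machinery (Lemmas~\ref{th2} and~\ref{th3}) on which the rest of the paper's main proofs are built, keeping the toolkit uniform.
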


\begin{proof}
    Let $O$ be the circumcenter of $\triangle ABC$. By $\infty_D$ denote the isogonal conjugate of $D$ with respect to $\triangle ABC$. Let the line $O\infty_D$ intersect the circumcircle of $\triangle ABC$ at $K$ and $L$, and by $\infty_K$ and $\infty_L$ we denote the isogonal conjugates of $K$ and $L$ with respect to $\triangle ABC$ respectively. Then, we have
    \begin{equation}
        -1 = (O,\infty_D;K,L) = (H,D;\infty_K,\infty_L),
    \end{equation}
    which implies that the asymptotes of $\mathcal{H}$ intersect each other on $HD$. By symmetry, the intersection point must be in the midpoint--which is the center of $\mathcal{H}$.
\end{proof}

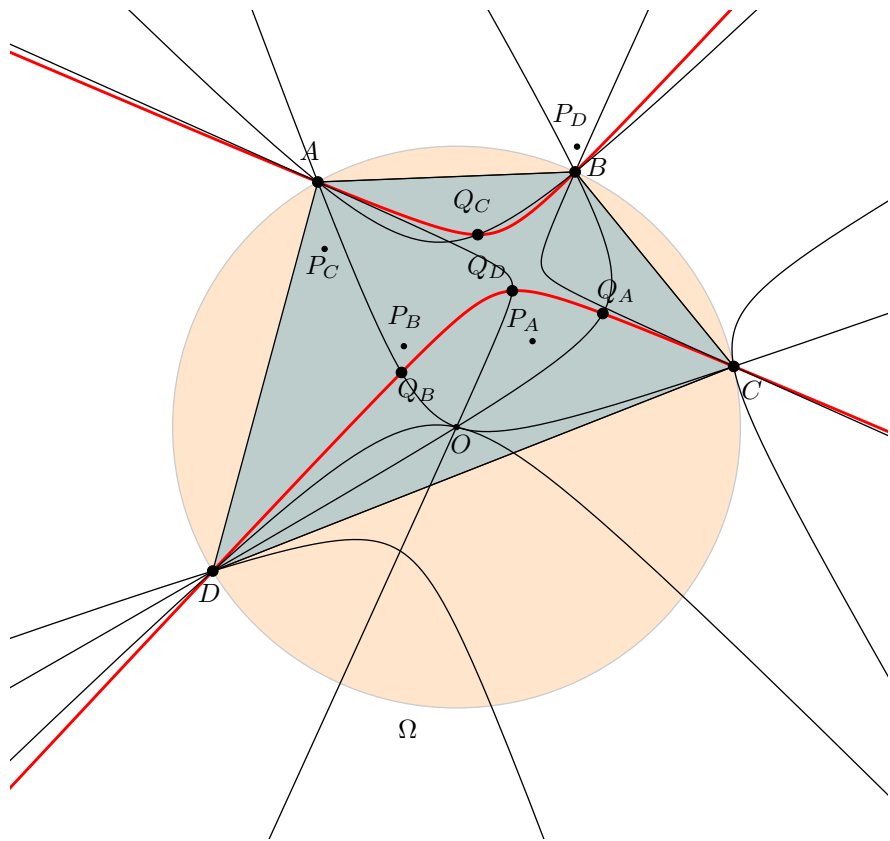
\begin{figure}[ht]
    \centering
    \begin{tikzpicture}[line cap=round,line join=round,>=triangle 45,x=1cm,y=1cm]
\clip(-5.884400429503071,-5.479897695097389) rectangle (5.686422189363454,5.548923238534636);
\draw [fill = LightBlue, opacity = 1] (-1.82782099627875,3.2648231813625923) -- (1.5635872464765799,3.3992932975334424) -- (3.6533450748449545,0.8081273192425275) -- (-3.212662118153669,-1.9180203634425737);
\draw [line width = 0.5pt, fill = orange, opacity = 0.2] (0,0) circle (3.7416573867739413cm);
\draw [line width = 0.5pt] (-1.82782099627875,3.2648231813625923)-- (-3.212662118153669,-1.9180203634425737);
\draw [line width = 0.5pt] (-3.212662118153669,-1.9180203634425737)-- (3.6533450748449545,0.8081273192425275);
\draw [line width = 0.5pt] (3.6533450748449545,0.8081273192425275)-- (1.5635872464765799,3.3992932975334424);
\draw [line width = 0.5pt] (1.5635872464765799,3.3992932975334424)-- (-1.82782099627875,3.2648231813625923);
\draw [samples=50,domain=-0.99:0.99,rotate around={-78.03692482797108:(0.5449698876154211,2.1903525319212744)},xshift=0.5449698876154211cm,yshift=2.1903525319212744cm,line width = 1.1pt, red] plot ({0.3993039015237368*(1+(\x)^2)/(1-(\x)^2)},{0.562232466212539*2*(\x)/(1-(\x)^2)});
\draw [samples=50,domain=-0.99:0.99,rotate around={-78.03692482797108:(0.5449698876154211,2.1903525319212744)},xshift=0.5449698876154211cm,yshift=2.1903525319212744cm,line width = 1.1pt, red] plot ({0.3993039015237368*(-1-(\x)^2)/(1-(\x)^2)},{0.562232466212539*(-2)*(\x)/(1-(\x)^2)});
\draw [samples=50,domain=-0.99:0.99,rotate around={-90.42327901738525:(-0.24576009122841694,1.2452701522383154)},xshift=-0.24576009122841694cm,yshift=1.2452701522383154cm,line width = 0.5pt] plot ({1.2170019876873372*(1+(\x)^2)/(1-(\x)^2)},{1.2170019876873372*2*(\x)/(1-(\x)^2)});
\draw [samples=50,domain=-0.99:0.99,rotate around={-90.42327901738525:(-0.24576009122841694,1.2452701522383154)},xshift=-0.24576009122841694cm,yshift=1.2452701522383154cm,line width = 0.5pt] plot ({1.2170019876873372*(-1-(\x)^2)/(1-(\x)^2)},{1.2170019876873372*(-2)*(\x)/(1-(\x)^2)});
\draw [samples=50,domain=-0.99:0.99,rotate around={64.89197434815925:(-0.4651061266088616,-0.7793614463192434)},xshift=-0.4651061266088616cm,yshift=-0.7793614463192434cm,line width = 0.5pt] plot ({0.8985343439661689*(1+(\x)^2)/(1-(\x)^2)},{0.8985343439661689*2*(\x)/(1-(\x)^2)});
\draw [samples=50,domain=-0.99:0.99,rotate around={64.89197434815925:(-0.4651061266088616,-0.7793614463192434)},xshift=-0.4651061266088616cm,yshift=-0.7793614463192434cm,line width = 0.5pt] plot ({0.8985343439661689*(-1-(\x)^2)/(1-(\x)^2)},{0.8985343439661689*(-2)*(\x)/(1-(\x)^2)});
\draw [samples=50,domain=-0.99:0.99,rotate around={-15.094689508132747:(2.8385206230761,1.5020350066517598)},xshift=2.8385206230761cm,yshift=1.5020350066517598cm,line width = 0.5pt] plot ({0.8522525069311683*(1+(\x)^2)/(1-(\x)^2)},{0.8522525069311683*2*(\x)/(1-(\x)^2)});
\draw [samples=50,domain=-0.99:0.99,rotate around={-15.094689508132747:(2.8385206230761,1.5020350066517598)},xshift=2.8385206230761cm,yshift=1.5020350066517598cm,line width = 0.5pt] plot ({0.8522525069311683*(-1-(\x)^2)/(1-(\x)^2)},{0.8522525069311683*(-2)*(\x)/(1-(\x)^2)});
\draw [samples=50,domain=-0.99:0.99,rotate around={20.705721045816247:(0.9279156724785139,2.0295864831850823)},xshift=0.9279156724785139cm,yshift=2.0295864831850823cm,line width = 0.5pt] plot ({0.2187803928772484*(1+(\x)^2)/(1-(\x)^2)},{0.2187803928772484*2*(\x)/(1-(\x)^2)});
\draw [samples=50,domain=-0.99:0.99,rotate around={20.705721045816247:(0.9279156724785139,2.0295864831850823)},xshift=0.9279156724785139cm,yshift=2.0295864831850823cm,line width = 0.5pt] plot ({0.2187803928772484*(-1-(\x)^2)/(1-(\x)^2)},{0.2187803928772484*(-2)*(\x)/(1-(\x)^2)});
\draw [fill=black] (-1.82782099627875,3.2648231813625923) circle (2pt);
\draw[color=black] (-1.939488272862,3.6806909771070444) node {$A$};
\draw [fill=black] (1.5635872464765799,3.3992932975334424) circle (2pt);
\draw[color=black] (1.852537060741967,3.4584477341119406) node {$B$};
\draw [fill=black] (3.6533450748449545,0.8081273192425275) circle (2pt);
\draw[color=black, yshift = -3] (3.8943968557594872,0.59706598054998) node {$C$};
\draw [fill=black] (-3.212662118153669,-1.9180203634425737) circle (2pt);
\draw[color=black] (-3.2590575281454317,-2.208754962263205) node {$D$};
\draw [fill=black] (1.9281291940605165,1.5129996127975716) circle (2pt);
\draw[color=black, yshift = 14] (2.1025607091114593,1.3193565202840674) node {$Q_{A}$};
\draw [fill=black] (1.0021351015839333,1.144700126666698) circle (1pt);
\draw[color=black, xshift = 4, yshift = -2] (0.7413208457664454,1.499929155217589) node {$P_{A}$};
\draw [fill=black] (-0.6935690197937322,1.0774650685812728) circle (1pt);
\draw[color=black] (-0.6754798283273444,1.472148749843201) node {$P_{B}$};
\draw [fill=black] (-1.7384479339779197,2.3730480577267303) circle (1pt);
\draw[color=black,yshift = -4, xshift = 4] (-1.8978176648004181,2.277780505700452) node {$P_{C}$};
\draw [fill=black] (-0.7250109899920796,0.7272038668785127) circle (2pt);
\draw[color=black, xshift = 8] (-0.8004916525120905,0.48594435905242817) node {$Q_{B}$};
\draw [fill=black] (0.2797844924383188,2.5632167839852356) circle (2pt);
\draw[color=black] (0.21349314365307276,3.0000710454345394) node {$Q_{C}$};
\draw [fill=black] (0.7354851953127071,1.8150093458969967) circle (2pt);
\draw[color=black] (0.407955981273789,2.1249882761413184) node {$Q_{D}$};
\draw [fill=black, xshift = -3] (1.694555662521393,3.7361218990692833) circle (1pt);
\draw[color=black] (1.5052819935621162,4.15295786847164) node {$P_{D}$};
\draw [fill=uuuuuu] (0,0) circle (1pt);
\draw[color=uuuuuu, yshift = -2] (0.06070091409393856,-0.1530049645584952) node {$O$};
\draw[color=uuuuuu, yshift = -110, xshift = -20] (0.06070091409393856,-0.1530049645584952) node {$\Omega$};
\end{tikzpicture}
    \caption{$A, B, C, D, Q_{A}, Q_{B}, Q_{C}, Q_D$ lie on a conic.}
    \label{fig:figureMainLemma}
\end{figure}

Now we state the main result of this paper (Fig. \ref{fig:figureMainLemma}).

\begin{thm}
	\label{themG}
	Let $A, B, C, D$ be distinct ordered points lying on a circle, denoted by $\Omega$ with center \(O\). By $H_{D}$, $H_{B}$, $H_{C}$, $H_{A}$ denote the orthocenters of \(\triangle ABC, \triangle ACD, \) \(\triangle ABD, \triangle BCD\), and let \(P_{D}\) be any point on the Euler line of \(\triangle ABC\). Let \(P_{A}\) be the point on the Euler line of \(\triangle BCD\) such that
	\begin{equation}\label{eq2}
		\frac{P_{A}H_{A}}{P_{A}O} \;=\; \frac{P_{D}H_{D}}{P_{D}O},
	\end{equation}
	where \(H_{A}\) is the orthocenter of \(\triangle BCD\). Define \(P_{B},P_{C}\) similarly on the Euler lines of \(\triangle ACD\) and \(\triangle ABD\). Let \(Q_{D}\) be the isogonal conjugate of \(P_{D}\) with respect to \(\triangle ABC\), and define \(Q_{A},Q_{B},Q_{C}\) analogously in their respective triangles. Then the points
	\begin{equation}
		A,\;B,\;C,\;D,\;Q_{A},\;Q_{B},\;Q_{C},\;Q_{D}
	\end{equation}
	all lie on a single conic.
	\end{thm}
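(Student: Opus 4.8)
My plan is to collapse the entire one-parameter family to three degenerate instances by exhibiting a single homography that controls all of them. Place the circumcenter $O$ at the origin; then the orthocenter of any triangle inscribed in $\Omega$ is the vector sum of its vertices, so $H_{D}=A+B+C$, $H_{A}=B+C+D$, and cyclically. Since the Euler line of each triangle is precisely the line $OH_{X}$, the constraint \eqref{eq2} says that each $P_{X}$ divides $OH_{X}$ in one fixed ratio; with $O$ at the origin this is exactly the statement that $P_{X}=k\,H_{X}$ for a single common scalar $k$, where letting $k$ range over the projective line of parameters (with $k=\infty$ the point at infinity of the Euler line) sweeps out the whole family of the theorem. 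As $k\mapsto P_{X}=kH_{X}$ is a projective parametrization of the Euler line, Lemma~\ref{th3} makes $k\mapsto Q_{X}(k)$ a projective parametrization of the Je\v{r}\'abek hyperbola $\mathcal{J}_{X}$ of the corresponding triangle, each $\mathcal{J}_{X}$ passing through the three relevant vertices among $A,B,C,D$.

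Next I consider the pencil $\Pi$ of conics through the four fixed points $A,B,C,D$, which is itself a projective line. For a generic point $Q$ there is a unique member of $\Pi$ through $Q$, and I claim the assignment $Q\mapsto(\text{that member})$ is projective when $Q$ runs over a conic $\Gamma$ meeting three of the base points. Writing the pencil as $\lambda\Phi_{1}+\mu\Phi_{2}$ and restricting to a rational parameter $t$ on $\Gamma$, each $\Phi_{i}|_{\Gamma}$ is a quartic in $t$ vanishing at the three shared base points, hence factors as a fixed cubic times a linear form $L_{i}(t)$; the fourth intersection is then the root of $\lambda L_{1}+\mu L_{2}$, which depends on $[\lambda:\mu]$ by a homography. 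Taking $\Gamma=\mathcal{J}_{X}$ (which meets every member of $\Pi$ in its three relevant vertices plus the single further point $Q_{X}$), the map sending $Q_{X}(k)$ to the member $\mathcal{C}_{X}(k)\in\Pi$ through it is projective, so composing with $k\mapsto Q_{X}(k)$ yields four homographies $k\mapsto\mathcal{C}_{X}(k)$ from the parameter line to $\Pi$, for $X\in\{A,B,C,D\}$.

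The theorem is now exactly the claim that these four homographies coincide, for then the common member $\mathcal{C}_{A}(k)=\mathcal{C}_{B}(k)=\mathcal{C}_{C}(k)=\mathcal{C}_{D}(k)$ of $\Pi$ carries all eight points. Because a homography of the projective line is determined by its values at three points, it suffices to match them at $k=0,1,\infty$. At $k=0$ we have $P_{X}=O$, whose isogonal conjugate is the orthocenter, so $Q_{X}=H_{X}$ and Proposition~\ref{orthocenters} supplies the common conic. At $k=1$ we have $P_{X}=H_{X}$, whose isogonal conjugate is $O$, so all four $Q_{X}$ collapse onto $O$ and $\mathcal{C}_{X}(1)$ is the unique member of $\Pi$ through $O$. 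At $k=\infty$ the point $P_{X}$ is the infinite point of the Euler line, whose isogonal conjugate lies on the circumcircle $\Omega$; since $A,B,C,D\in\Omega$ already, the member of $\Pi$ through $Q_{X}(\infty)$ is $\Omega$ itself. In each of the three cases the four members of $\Pi$ agree, so the four homographies agree at $k=0,1,\infty$ and therefore identically.

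\textbf{Main obstacle.} The step I expect to require the most care is the projectivity of the pencil-member map $Q_{X}\mapsto\mathcal{C}_{X}$, together with confirming that the maps are genuine (nondegenerate) homographies so that the three-point principle applies; the quartic factorization above is the crux, and one should check in generic position and then extend by continuity, since conconicity is a closed condition. The identification at $k=\infty$ — that isogonal conjugates of infinite points lie on $\Omega$, forcing $\mathcal{C}_{X}(\infty)=\Omega$ — is the other decisive observation, as it is what makes a clean third sample value available. Should the projective bookkeeping near the base points prove delicate, I would fall back on the algebraic route: with $O$ at the origin and the $Q_{X}$ obtained from $P_{X}=kH_{X}$ via the explicit isogonal-conjugate formula, conconicity reduces to the vanishing of a single $6\times6$ determinant with entries rational in $k$, which one verifies is identically zero.
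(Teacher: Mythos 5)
Your proposal is correct and takes essentially the same route as the paper's third (``more algebraic'') proof: both treat the pencil of conics through $A,B,C,D$ as a projective line, show via the restrict-to-the-conic, factor-out-the-fixed-roots argument (the paper's Lemma~\ref{lemks}, proved with Vieta's formulas) that the map sending a point of the Euler line to the pencil member through its isogonal conjugate is a projectivity, and then check agreement of the resulting homographies at the three parameter values corresponding to $O$, $H_X$, and the point at infinity, obtaining there the orthocentric conic, the conic through $O$, and the circumcircle respectively. The only cosmetic difference is at $k=0$, where you invoke Proposition~\ref{orthocenters} directly, while the paper instead uses that both images are rectangular hyperbolas through $A,B,C,D$ and hence coincide.
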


Without loss of generality, we will prove that the points $A, B, C, D, Q_A,$ and $Q_D$ lie on a single conic. By symmetry, an analogous argument can be constructed for the remaining pairs of $Q$ points. Since five points, no three collinear, uniquely determine a conic, this is sufficient to establish that all eight points lie on a single conic.

Furthermore, we assume that no sub-triangle is equilateral. For an equilateral triangle, the Euler line degenerates to a single point where the orthocenter and circumcenter coincide. In such a case, Theorem \ref{themG} reduces to that of Proposition \ref{orthocenters}. 

\subsection{More (or less) synthetic treatments}
    We give the first proof, using projective and synthetic tools. 
\begin{rem}[Degenerated cases]
    In all the proofs presented in this paper, whenever we refer to Lemma~\ref{th3} and Lemma~\ref{th4}, we assume that the locus of points described in these lemmas is a conic. If the locus happens to be a line, such cases can be handled separately by an argument of continuity in geometry, since degenerate cases are finitely many.
\end{rem}

    \begin{proof}
    
    By $\infty_A$ and $\infty_D$ denote the isogonal conjugates of points $A$ and $D$ with respect to triangles $DBC$ and $ABC$, respectively.  Since the quadrilateral $ABCD$ is cyclic, the isogonal conjugates $\infty_A$ and $\infty_D$ lie on the line at infinity.

     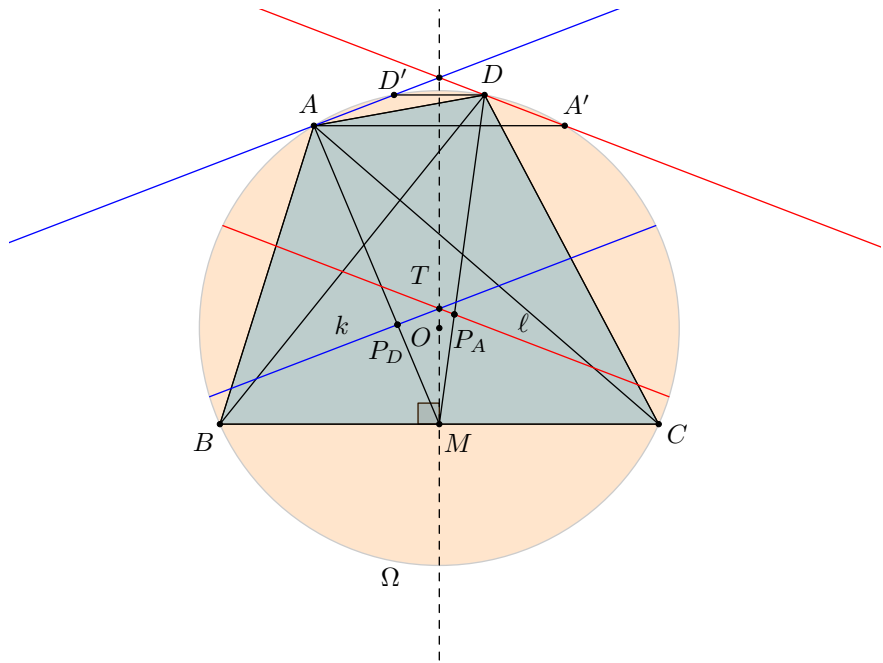
\begin{figure}[ht]
            \centering
            \begin{tikzpicture}[line cap=round,line join=round,>=triangle 45,x=1cm,y=1cm]
\clip(-5.670396082614365,-4.422141654689727) rectangle (5.820489864438509,4.242159189072243);

\draw [fill = LightBlue, opacity = 1] (2.8916431315084505,-1.28) -- (-2.891643131508451,-1.28) -- (-1.6528981964245268,2.6959094109885346) -- (0.5964694176999945,3.105515131785487) -- cycle;

\draw[line width=0.5pt,fill=black,fill opacity=0.1] (0,-0.9998207676433972) -- (-0.28017923235660286,-0.9998207676433971) -- (-0.28017923235660286,-1.28) -- (0,-1.28) -- cycle; 

\draw [line width = 0.5pt, fill =orange, opacity = 0.2, draw = black] (0,0) circle (3.1622776601683795cm);

\draw [line width = 0.5pt] (-1.6528981964245268,2.6959094109885346)-- (-2.891643131508451,-1.28);
\draw [line width = 0.5pt] (0.5964694176999945,3.105515131785487)-- (2.8916431315084505,-1.28);
\draw [line width = 0.5pt] (0.5964694176999945,3.105515131785487)-- (-2.891643131508451,-1.28);
\draw [line width = 0.5pt] (-1.6528981964245268,2.6959094109885346)-- (2.8916431315084505,-1.28);
\draw [line width = 0.5pt] (-1.6528981964245268,2.6959094109885346)-- (0.5964694176999945,3.105515131785487);
\draw [line width = 0.5pt] (-2.891643131508451,-1.28)-- (2.8916431315084505,-1.28);
\draw [line width = 0.5pt] (-1.6528981964245268,2.6959094109885346)-- (0,-1.28);
\draw [line width = 0.5pt] (0,-1.28)-- (0.5964694176999945,3.105515131785487);

\draw [line width = 0.5pt,domain=-9.670396082614365:8.820489864438509, blue] plot(\x,{(-7.113124955652221-0.82652949420379*\x)/-2.131731808927441});
\draw [line width = 0.5pt,domain=-9.670396082614365:8.820489864438509, red] plot(\x,{(-7.113124955652221--0.8265294942037908*\x)/-2.1317318089274404});

\draw [line width = 0.5pt, dashed] (0,-4.422141654689727) -- (0,4.242159189072243);
\draw [line width = 0.5pt] (-1.6528981964245268,2.6959094109885346)-- (1.6528981964245253,2.695909410988535);
\draw [line width = 0.5pt] (-0.5964694176999948,3.1055151317854865)-- (0.5964694176999945,3.105515131785487);
\draw [line width = 0.5pt, blue] (-3.027082141997325,-0.914753357797)-- (2.8525358494462987,1.3649319498142323);
\draw [line width = 0.5pt, red] (-2.8525358494462982,1.3649319498142334)-- (3.027082141997325,-0.9147533577970015);

\draw [fill=black] (2.8916431315084505,-1.28) circle (1pt);
\draw[color=black, yshift = -12, xshift = 4] (2.9958607911168547,-0.9947238666466917) node {$C$};
\draw [fill=black] (-2.891643131508451,-1.28) circle (1pt);
\draw[color=black, yshift = -15, xshift = -9] (-2.7891449551754017,-0.9947238666466917) node {$B$};
\draw [fill=black] (-1.6528981964245268,2.6959094109885346) circle (1pt);
\draw[color=black, xshift = -5] (-1.547614041587566,2.9808166119697006) node {$A$};
\draw [fill=black] (0.5964694176999945,3.105515131785487) circle (1pt);
\draw[color=black] (0.6977078234117116,3.3902576579401598) node {$D$};
\draw [fill=black] (0,-1.28) circle (1pt);
\draw[color=black, xshift = 3, yshift = 4] (0.14298124500012535,-1.668320426146479) node {$M$};
\draw [fill=black] (0.19882313923333134,0.1818383772618291) circle (1pt);
\draw [fill=black] (-0.5509660654748424,0.045303136996178384) circle (1pt);
\draw [fill=black] (0,0) circle (1pt);
\draw[color=black] (-0.24004424961739848,-0.12301067199990795) node {$O$};
\draw [fill=black] (-0.5509660654748424,0.045303136996178384) circle (1pt);
\draw[color=black, xshift = 5] (-0.8740174820877827,-0.3211273071469042) node {$P_D$};
\draw [fill=black] (0.1988231392333313,0.18183837726182908) circle (1pt);
\draw[color=black, yshift = -7] (0.4071367585294522,0.07510596314708834) node {$P_A$};
\draw[color=black, xshift = 100, yshift = 15] (-8.67981290687939,0.5637936631763458) node {};
\draw [fill=black] (-0.5964694176999948,3.1055151317854865) circle (1pt);
\draw[color=black, xshift = -5, yshift = -2] (-0.4381608847643936,3.3902576579401598) node {$D^{\prime}$};
\draw[color=black, xshift = -100, yshift = 30] (8.728035434703246,-0.21546510173517291) node {};
\draw [fill=black] (1.6528981964245253,2.695909410988535) circle (1pt);
\draw[color=black] (1.807160980234884,2.9808166119697006) node {$A^{\prime}$};
\draw [fill=black] (0,0.2589274313437391) circle (1pt);
\draw[color=black] (-0.2532520252938648,0.7090791956174765) node {$T$};
\draw [fill=black] (0,3.3367822940312175) circle (1pt);
%\draw[color=black, yshift = 2, xshift = 2] (0.15618902067659168,3.6147898444400886) node {$T^{\prime}$};
\draw[color=black, xshift = -20, yshift = -9] (-0.5834464172055234,0.36567702802934954) node {$k$};
\draw[color=black, yshift = -2] (1.1071488693821683,0.14114484152942042) node {$\ell$};
\draw[color=uuuuuu, yshift = -90, xshift = -20] (0.06070091409393856,-0.1530049645584952) node {$\Omega$};
\end{tikzpicture}
            \caption{Point $T$ lies on the perpendicular bisector of the segment $BC$.}
            \label{platek2}
        \end{figure}

    Let line $k$ be the line passing through the points $P_D$ and $\infty_D$, and let line $\ell$ be the line passing through the points $P_A$ and $\infty_A$. Denote $T$ as the intersection of $k$ and $\ell$. Note that since $\infty_A$ and $\infty_D$ are fixed and independent of the specific choice of $P_A$ and $P_D$, and $P_AP_D \parallel H_AH_D$, the locus of the point $T$ is a fixed straight line passing through $O$. We will show that this line is the perpendicular bisector of the side $BC$--denoted by $m$ (Fig. \ref{platek2}).
        
    Now consider the case $P_A = H_A$, which implies $P_D = H_D$. It suffices to demonstrate that $T$ lies on the perpendicular bisector of the side $BC$. Consider $A'$, $D'$, $H_A'$ and $H_D'$ -- reflections of $A$, $D$, $H_A$ and $H_D$, respectively, in the perpendicular bisector of $BC$. We have $AD' \parallel k$ and $DA' \parallel \ell$ from the definition and that $AD'$ and $DA'$ are reflections of each other in our bisector. Moreover, 
     \begin{equation}
         |AH_D| = 2|OM_{BC}| =  |DH_A| = |D'H_A'|.
     \end{equation}
    Now, from $AH_D \parallel D'H_A'$, we find that $AD'H_DH_A'$ is a parallelogram, which gives $AD' \parallel H_DH_A' \implies \ell = H_DH_A'$. Similarly we get that $k = H_AH_D'$, and our claim follows.

        Let $T_1$ and $T_2$ denote the isogonal conjugates of the point $T$ with respect to $\triangle ABC$ and $\triangle DBC$, respectively. By Lemma \ref{th3}, the line $\ell$, under isogonal conjugation with respect to $\triangle DBC$, maps to a circumconic of $\triangle DBC$. Since the point $\infty_A$ lies on $\ell$ and is the isogonal conjugate of point $A$ with respect to $\triangle DBC$, it follows that $A$ also lies on this conic. Furthermore, by the definition of point $T_2$ and the fact that point $P_A$ lies on $\ell$, both $T_2$ and $Q_A$ lie on this conic.

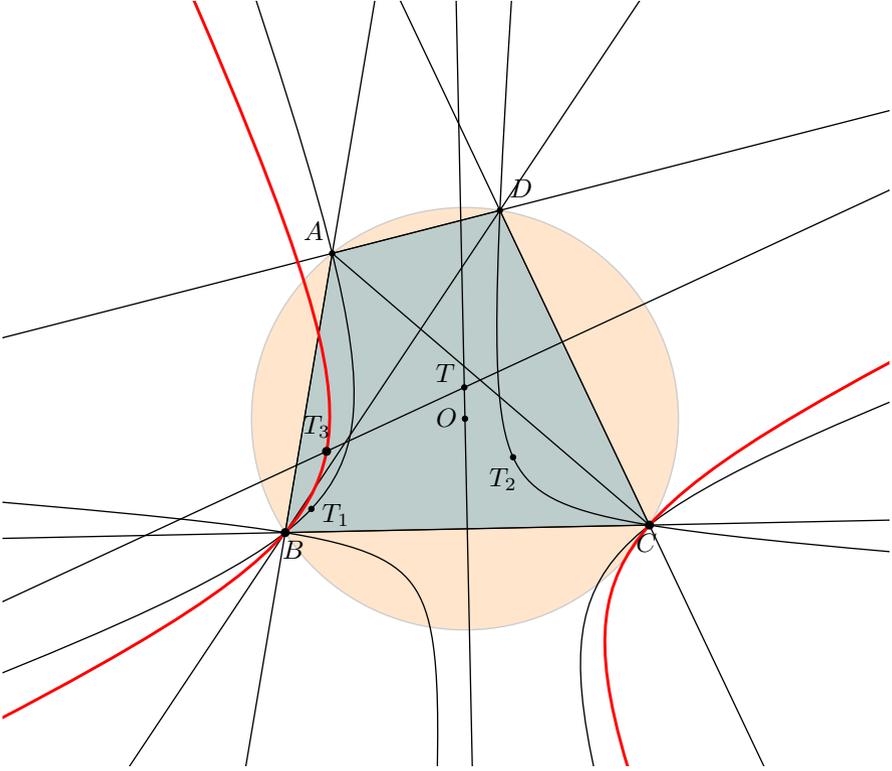
\begin{figure}
    \centering
    \begin{tikzpicture}[line cap=round,line join=round,>=triangle 45,x=1cm,y=1cm]
\clip(-8.325829998854992,-4.74197886113063) rectangle (3.364513866782811,5.4333056492741445);
\draw [fill = LightBlue, opacity = 1] (-3.98,2.08) -- (-4.6,-1.64) -- (0.2,-1.54) -- (-1.7701037227306569,2.652151402568374) -- cycle;
\draw [line width = 0.5pt, fill = orange, opacity = 0.2] (-2.2305574912891983,-0.12324041811846673) circle (2.813328532417638cm);
\draw [line width = 0.5pt,domain=-8.325829998854992:7.364513866782811] plot(\x,{(-16.0952-3.72*\x)/-0.62});
\draw [line width = 0.5pt,domain=-8.325829998854992:7.364513866782811] plot(\x,{(-7.412--0.1*\x)/4.8});
\draw [line width = 0.5pt,domain=-8.325829998854992:7.364513866782811] plot(\x,{(--2.1955294524915367--4.192151402568374*\x)/-1.9701037227306568});
\draw [line width = 0.5pt,domain=-8.325829998854992:7.364513866782811] plot(\x,{(-6.8737468389423615-0.572151402568374*\x)/-2.209896277269343});
\draw [line width = 0.5pt,domain=-8.325829998854992:7.364513866782811] plot(\x,{(--10.719--4.8*\x)/-0.1});
\draw [line width = 0.5pt] (-3.98,2.08)-- (0.2,-1.54);
\draw [line width = 0.5pt,domain=-8.325829998854992:7.364513866782811] plot(\x,{(-15.102866557092797-4.292151402568374*\x)/-2.829896277269343});

\draw [samples=50,domain=-0.99:0.99,rotate around={-25.177912521427697:(-2.2,-1.59)},xshift=-2.2cm,yshift=-1.59cm,line width = 0.5pt] plot ({1.8677747941555003*(1+(\x)^2)/(1-(\x)^2)},{1.8677747941555003*2*(\x)/(1-(\x)^2)});
\draw [samples=50,domain=-0.99:0.99,rotate around={-25.177912521427697:(-2.2,-1.59)},xshift=-2.2cm,yshift=-1.59cm,line width = 0.5pt] plot ({1.8677747941555003*(-1-(\x)^2)/(1-(\x)^2)},{1.8677747941555003*(-2)*(\x)/(1-(\x)^2)});

\draw [samples=50,domain=-0.99:0.99,rotate around={-139.11320735863677:(-2.2,-1.59)},xshift=-2.2cm,yshift=-1.59cm,line width=0.5pt] plot ({1.0302153631127304*(1+(\x)^2)/(1-(\x)^2)},{1.0302153631127304*2*(\x)/(1-(\x)^2)});
\draw [samples=50,domain=-0.99:0.99,rotate around={-139.11320735863677:(-2.2,-1.59)},xshift=-2.2cm,yshift=-1.59cm,line width=0.5pt] plot ({1.0302153631127304*(-1-(\x)^2)/(1-(\x)^2)},{1.0302153631127304*(-2)*(\x)/(1-(\x)^2)});

\draw [samples=50,domain=-0.99:0.99,rotate around={-19.871215738808857:(-2.2,-1.59)},xshift=-2.2cm,yshift=-1.59cm,line width = 1.1pt, red] plot ({2.0672813413502626*(1+(\x)^2)/(1-(\x)^2)},{2.0672813413502626*2*(\x)/(1-(\x)^2)});
\draw [samples=50,domain=-0.99:0.99,rotate around={-19.871215738808857:(-2.2,-1.59)},xshift=-2.2cm,yshift=-1.59cm,line width = 1.1pt, red] plot ({2.0672813413502626*(-1-(\x)^2)/(1-(\x)^2)},{2.0672813413502626*(-2)*(\x)/(1-(\x)^2)});

\draw [line width = 0.5pt,domain=-8.325829998854992:7.364513866782811] plot(\x,{(--2.4401357007012363--0.8515182916907675*\x)/1.815422136650592});

\draw [fill=black] (-3.98,2.08) circle (1pt);
\draw[color=black, xshift = -10] (-3.872419569504368,2.379341531313904) node {$A$};
\draw [fill=black] (-4.6,-1.64) circle (1.5pt);
\draw[color=black, yshift = -15](-4.492863746968077,-1.3433235334683307) node {$B$};
\draw [fill=black] (0.2,-1.54) circle (1.5pt);
\draw[color=black, yshift = -15, xshift = -4] (0.3052378920846078,-1.2468099947517541) node {$C$};
\draw [fill=black] (-1.7701037227306569,2.652151402568374) circle (1pt);
\draw[color=black, xshift  = 5] (-1.6663958274111794,2.9446351152252803) node {$D$};
\draw [fill=black] (-2.2392459002745584,0.29380321317881286) circle (1pt);
\draw[color=black, xshift = -10, yshift = -3] (-2.135175872605982,0.5869472408631984) node {$T$};
\draw [fill=black] (-4.255005522010615,-1.3248809950025164) circle (1pt);
\draw[color=black, yshift = -12, xshift = 5] (-4.1068095921017695,-0.9848446753781894) node {$T_1$};
\draw [fill=black] (-1.5963425767365,-0.6365420223812464) circle (1pt);
\draw[color=black, yshift = -18, xshift = -8] (-1.4457934532018606,-0.29546225597407205) node {$T_2$};
\draw [fill=black] (-4.05466803692515,-0.5577150785119547) circle (1.5pt);
\draw[color=black, xshift = -8] (-3.899994866280533,-0.22652401403366026) node {$T_3$};
\draw [fill=black] (-2.2305574912891983,-0.12324041811846673) circle (1pt);
\draw[color=black, xshift = -10, yshift = -8] (-2.1213882242178994,0.1733177892207279) node {$O$};
\end{tikzpicture}
    \caption{$TT_3$ goes through the point $\infty_D$.}
    \label{fig3}
\end{figure}

    By a similar argument, conjugating the line $k$ with respect to $\triangle ABC$ yields that the points $A$, $B$, $C$, $D$, $T_1$, and $Q_D$ lie on a single conic. Therefore, to establish that $A$, $B$, $C$, $D$, $Q_A$, and $Q_D$ lie on a single conic, it suffices to prove that points $A$, $B$, $C$, $D$, $T_1$, and $T_2$ lie on a single conic.

    Now let $T_3$ denote the isogonal conjugate of $T_2$ with respect to $\triangle ABC$. We will show now that line $TT_3$ goes through point $\infty_D$ (Fig. \ref{fig3}). 

    Now, from Lemma~\ref{th3} it follows that $T_2$ moves along a conic passing through $D$, $B$, and $C$, as the isogonal conjugate of $T \in m$ with respect to $\triangle DBC$. Let us denote this conic by $\Gamma_1$. We then have the following maps:
    \begin{equation}
            T(m) \overset{}\longmapsto  T_2(\Gamma_1),
    \end{equation}
    \begin{equation}\label{eq3}
        T_2(\Gamma_1) \overset{}\longmapsto CT_2( \mathcal{L}(C)) \overset{}\longmapsto CT_3(\mathcal{L}(C)),
    \end{equation}
    \begin{equation}\label{eq4}
        T_2(\Gamma_1) \overset{}\longmapsto BT_2(\mathcal{L}(B)) \overset{}\longmapsto BT_3(\mathcal{L}(B)).
    \end{equation}
    From equations \eqref{eq3} and \eqref{eq4}, we obtain the following projective map: 
    \begin{equation}
        CT_3(\mathcal{L}(C))\overset{}\longmapsto BT_3(\mathcal{L}(B)).
    \end{equation}
    By Lemma \ref{th2}, we conclude that $T_3$ lies on a conic passing through points $B$ and $C$, which we denote by $\Gamma_2$. Thus, we have the following projective map:
    \begin{equation}
        BT_3(\mathcal{L}(B))\overset{}\longmapsto T_3(\Gamma_2).
    \end{equation}
    Note that since $D \in \Gamma_1$, it follows that $\infty_D \in \Gamma_2$. Therefore, we have the following projective map:
    \begin{equation}
        T_3(\Gamma_2) \overset{}\longmapsto \infty_DT_3(\mathcal{L}(\infty_D)) \overset{}\longmapsto m\cap\infty_DT_3 (m).
    \end{equation}
    By composing the above projective maps, we find that:
    \begin{equation}\label{2.13}
        T(m)\overset{}\longmapsto m\cap\infty_DT_3 (m).
    \end{equation}
   We will show that this map is the identity mapping. It is enough to show this in three cases, as a projective map is a Möbius transformation.

   \begin{enumerate}
       \item $T=\infty_m \implies T_2\in \Omega \implies T_3\in \ell_\infty \implies$ $T$, $T_3$, $\infty_D$ are collinear. Therefore, the map \eqref{2.13} is the identity mapping.
       \item Let $T = O$. By Lemma~\ref{th3}, we obtain that the conic passing through $A$, $B$, $C$, $H_D$, and $H_A$ is the isogonal conjugate of $TT_3$ with respect to $\triangle ABC$. Furthermore, by Proposition \ref{orthocenters}, we know that $A$, $B$, $C$, $H_D$, $H_A$, and $D$ lie on a single conic. Hence $\infty_D \in TT_3$, which proves the collinearity of $T$, $T_3$, and $\infty_D$. Therefore, the map \eqref{2.13} is the identity mapping.
       \item $T=M \implies T_2=D\implies T_3=\infty_D$. Thus $\infty_DT_3$ is tangent to $\Gamma_2$. Therefore by Lemma \ref{lemma1} (proved below) we obtain that this tangent line $DT_3$ passes through point $M=T$. Therefore, the map \eqref{2.13} is the identity mapping.
   \end{enumerate}
   
  We have shown that map \eqref{2.13} is the identity mapping in three cases. Therefore, it is the identity mapping in general. It follows directly that $T$, $T_3$, and $\infty_D$ are collinear.

       Now we take the isogonal conjugates of points on the line $TT_3$ with respect to $\triangle ABC$. Then, by Lemma~\ref{th3} and by taking the isogonal conjugates of the points $T$, $T_3$, and $\infty_D$ with respect to $\triangle ABC$, we obtain that points
    \begin{equation}
        A,\; B,\; C,\; D,\; T_1,\; T_2
    \end{equation}
    lie on a single conic. Which completes our proof.
    \end{proof}

\begin{figure}[ht]
    \centering
    \begin{tikzpicture}[line cap=round,line join=round,>=triangle 45,x=1cm,y=1cm]
\clip(-7.075834900193519,-5.387603053846561) rectangle (5.285589465798158,3.219155661097436);
\draw [fill = LightBlue, opacity = 1] (-3.98,2.08) -- (-4.6,-1.64) -- (0.25631585256672956,-1.6016388097362984) -- (-0.9305654105238774,2.427392492583314) -- cycle;
\draw [line width=0.5pt, fill = orange, opacity = 0.2] (-2.1836099969503824,-0.13106500050826925) circle (2.8488287908418153cm);
\draw [line width=0.5pt,domain=-7.075834900193519:7.285589465798158] plot(\x,{(-7.787896522996408--0.03836119026370155*\x)/4.85631585256673});
\draw [line width=0.5pt,domain=-2:2] plot(\x,{(--10.609327653434741--4.85631585256673*\x)/-0.03836119026370155});

\draw [samples=50,domain=-0.99:0.99,rotate around={32.17927914312907:(-2.171842073716635,-1.6208194048681488)},xshift=-2.171842073716635cm,yshift=-1.6208194048681488cm,line width=1.1pt, red] plot ({1.623334918027684*(1+(\x)^2)/(1-(\x)^2)},{1.623334918027684*2*(\x)/(1-(\x)^2)});
\draw [samples=50,domain=-0.99:0.99,rotate around={32.17927914312907:(-2.171842073716635,-1.6208194048681488)},xshift=-2.171842073716635cm,yshift=-1.6208194048681488cm,line width=1.1pt, red] plot ({1.623334918027684*(-1-(\x)^2)/(1-(\x)^2)},{1.623334918027684*(-2)*(\x)/(1-(\x)^2)});

\draw [samples=50,domain=-0.99:0.99,rotate around={-11.954257281119322:(-2.171842073716635,-1.6208194048681484)},xshift=-2.171842073716635cm,yshift=-1.6208194048681484cm,line width=1.1pt, red] plot ({2.3134292057315515*(1+(\x)^2)/(1-(\x)^2)},{2.3134292057315515*2*(\x)/(1-(\x)^2)});
\draw [samples=50,domain=-0.99:0.99,rotate around={-11.954257281119322:(-2.171842073716635,-1.6208194048681484)},xshift=-2.171842073716635cm,yshift=-1.6208194048681484cm,line width=1.1pt, red] plot ({2.3134292057315515*(-1-(\x)^2)/(1-(\x)^2)},{2.3134292057315515*(-2)*(\x)/(1-(\x)^2)});

\draw [line width=0.5pt] (-4.7787902820291,1.044037023772167)-- (0.3926827798587502,1.0848877155503465);
\draw [line width=0.5pt] (0.3926827798587502,1.0848877155503465)-- (0.43510613459583064,-4.285675833508466);
\draw [line width=0.5pt] (-0.9305654105238774,2.427392492583314)-- (-3.436654583376887,-2.6895224935998523);
\draw [line width=0.5pt] (-4.7787902820291,1.044037023772167)-- (0.4115702881283355,-1.3061670247887072);
\draw [line width=0.5pt,domain=-2.075834900193519:2.285589465798158] plot(\x,{(-2.715497100724024-2.979508808719759*\x)/0.023535846467493737});
\draw [line width=0.5pt] (-0.9070295640563837,-0.5521163161364452)-- (-3.436654583376887,-2.6895224935998523);
\draw [line width=0.5pt] (-4.7787902820291,1.044037023772167)-- (0.43510613459583064,-4.285675833508466);
\draw [fill=black] (-3.98,2.08) circle (1pt);
\draw[color=black, xshift = -3] (-3.8830050543271795,2.3546938253589262) node {$A$};
\draw [fill=black] (-4.6,-1.64) circle (1.5pt);
\draw[color=black, yshift = -16, xshift = -1] (-4.501379214198526,-1.368171014682832) node {$B$};
\draw [fill=black] (0.25631585256672956,-1.6016388097362984) circle (1.5pt);
\draw[color=black] (0.281555614194133,-1.8477264856034654) node {$C$};
\draw [fill=black] (-0.9305654105238774,2.427392492583314) circle (1.5pt);
\draw[color=black, xshift = 3] (-0.8289938974115504,2.69543060732885) node {$D$};
\draw [fill=black] (-2.1836099969503824,-0.13106500050826925) circle (1pt);
\draw[color=black, xshift = -5] (-2.0783620979679442,0.14621468296127302) node {$O$};
\draw [fill=black] (-0.9070295640563837,-0.5521163161364452) circle (1.5pt);
\draw[color=black, xshift = 3] (-0.8037541357841484,-0.28286126470455675) node {$H$};
\draw [fill=black] (-3.436654583376887,-2.6895224935998523) circle (1.5pt);
\draw[color=black, yshift = -14, xshift = -12] (-3.289870656083235,-2.415621122220005) node {$D^{\prime}$};
\draw [fill=red] (-2.171842073716635,-1.620819404868149) circle (1.5pt);
\draw[color=red, yshift = -20] (-1.9900229322720375,-1.3176914914280287) node {$M_{BC}$};
\draw [fill=black] (-4.7787902820291,1.044037023772167) circle (1.5pt);
\draw[color=black, xshift = -12, yshift = -5] (-4.678057545590339,1.3198635986354543) node {$K$};
\draw [fill=black] (0.3926827798587502,1.0848877155503465) circle (1pt);
\draw[color=black] (0.5465731112818528,1.357723241076557) node {$K^{\prime}$};
\draw [fill=black] (0.43510613459583064,-4.285675833508466) circle (1.5pt);
\draw[color=black, xshift = 4] (0.6349122769777594,-4.018345985560016) node {$K^{\prime \prime}$};
\draw [fill=black] (0.4115702881283355,-1.3061670247887072) circle (1.5pt);
\draw[color=black, xshift = 7, yshift = -3] (0.50871346884075,-1.0400541135266095) node {$L$};
\draw[color=uuuuuu, yshift = -85, xshift = -70] (0.06070091409393856,-0.1530049645584952) node {$\Omega$};
\draw[color=uuuuuu, yshift = -80, xshift = 100] (0.06070091409393856,-0.1530049645584952) node {$\mathcal{H}_1$};
\draw[color=uuuuuu, yshift = -140, xshift = 30] (0.06070091409393856,-0.1530049645584952) node {$\mathcal{H}_2$};
\end{tikzpicture}
    \caption{$M_{BC}$ is the center of the hyperbolas.}
    \label{fig:lemmaCenter}
\end{figure}
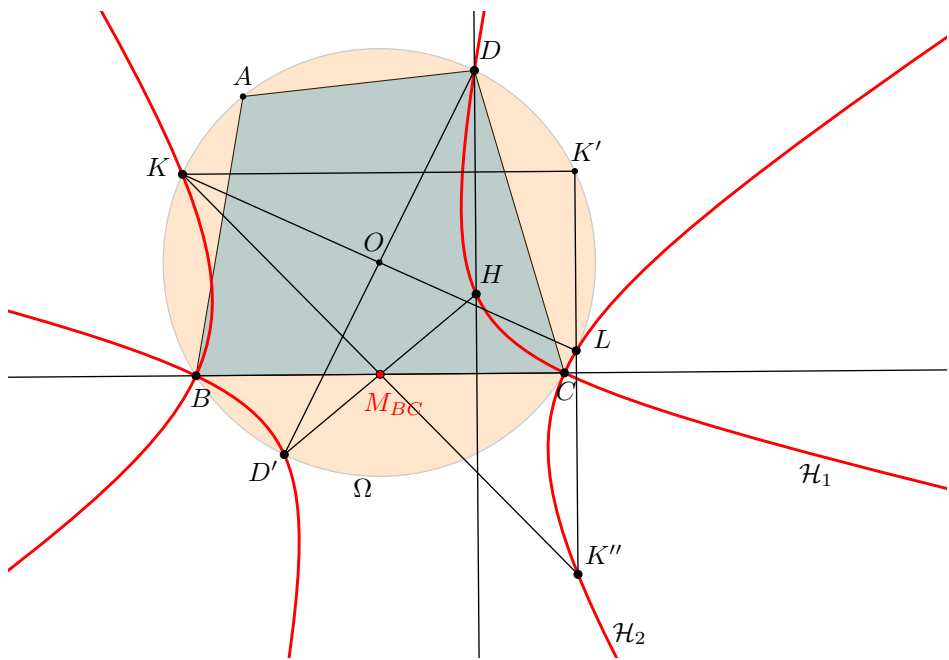

 \begin{lem}\label{lemma1}
       Let $A, B, C, D$ be four distinct points on a circle $\Omega$. Denote $\ell$ to be the perpendicular bisector of $BC$. Denote $\mathcal{H}_1$ to be the isogonal conjugate of $\ell$ with respect to $\triangle BCD$ and $\mathcal{H}_2$ to be the isogonal conjugate of $\mathcal{H}_1$ in with respect to $\triangle ABC$. Then, $\mathcal{H}_1$ and $\mathcal{H}_2$ are rectangular hyperbolas, and the midpoint of $BC$ is their center (Fig. \ref{fig:lemmaCenter}).
   \end{lem}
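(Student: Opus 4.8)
The plan is to treat $\mathcal{H}_1$ and $\mathcal{H}_2$ in turn, using Lemma~\ref{th4} to pin down each center once the curve is known to be a rectangular hyperbola and its fourth intersection with $\Omega$ has been identified. Write $g_T$ for isogonal conjugation with respect to a triangle $T$, and set $M := M_{BC}$. The single engine behind every angle computation is the following directed-angle identity (angles mod $\pi$): for a point $P$ on the circumcircle of a triangle $T$ with vertex $V$ and antipode $P^{*}$, the isogonal-conjugate directions satisfy $\angle\!\big(g_T(P),\, g_T(P^{*})\big) = \angle P^{*}VP$; in particular, $P,P^{*}$ are antipodal (so $PP^{*}$ is a diameter) if and only if $g_T(P) \perp g_T(P^{*})$. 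This is proved by reflecting $VP,VP^{*}$ across the bisector at $V$, which sends a direction at angle $\theta$ from $VB$ to angle $\angle(VB,VC)-\theta$.

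For $\mathcal{H}_1$: the line $\ell$, being the perpendicular bisector of the chord $BC$, passes through the common circumcenter $O$ of $\triangle ABC$ and $\triangle BCD$, and $g_{BCD}(O)=H_A$. By Lemma~\ref{th3}, $\mathcal{H}_1=g_{BCD}(\ell)$ is a circumconic of $\triangle BCD$ through its orthocenter $H_A$, hence a rectangular hyperbola (a circumconic is rectangular precisely when it passes through the orthocenter \cite{conics, yiu}). Since $g_{BCD}$ sends $\Omega$ to $\ell_{\infty}$, the fourth point $E$ of $\mathcal{H}_1\cap\Omega$ equals $g_{BCD}(\infty_\perp)$, where $\infty_\perp$ is the unique point of $\ell$ on $\ell_{\infty}$, i.e. the direction perpendicular to $BC$. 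I would then show $E=D^{*}$, the antipode of $D$: using $\angle DBD^{*}=\angle DCD^{*}=90^{\circ}$ one gets that the isogonals at $B,C$ of $BD^{*},CD^{*}$ are perpendicular to $BC$, while the isogonal at $D$ of the line $DO=DD^{*}$ is the altitude from $D$, again perpendicular to $BC$; hence $g_{BCD}(D^{*})=\infty_\perp\in\ell$ and $D^{*}\in\mathcal{H}_1$. Lemma~\ref{th4} now gives that the center of $\mathcal{H}_1$ is the midpoint of $H_AD^{*}$, and since the reflection of an orthocenter in the midpoint of a side is the antipode of the opposite vertex, this midpoint is exactly $M$.

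For $\mathcal{H}_2=g_{ABC}(\mathcal{H}_1)$: because $\mathcal{H}_1$ passes through the two base points $B,C$ of $g_{ABC}$, a degree count for the quadratic map $g_{ABC}$ shows $\mathcal{H}_2$ is again a conic; concretely, the second intersections of $\mathcal{H}_1$ with the sidelines $CA,AB$ are contracted to $B,C$, so $B,C\in\mathcal{H}_2$. Moreover $D,D^{*}\in\mathcal{H}_1\cap\Omega$ map to the infinite points $\infty_D=g_{ABC}(D)$ and $\infty_{D^{*}}=g_{ABC}(D^{*})$, so $\mathcal{H}_2$ is a hyperbola with these asymptotic directions, and by the engine identity $\angle(\infty_D,\infty_{D^{*}})=\angle D^{*}AD=90^{\circ}$, so $\mathcal{H}_2$ is rectangular.

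It remains to locate the center of $\mathcal{H}_2$, and this is the step I expect to be the main obstacle. I would apply Lemma~\ref{th4} to $\mathcal{H}_2$ and $\Omega$. Besides $B,C$, the conic $\mathcal{H}_2$ meets $\Omega$ in the two points $F=g_{ABC}(g_{BCD}(N))$ and $F'=g_{ABC}(g_{BCD}(N'))$, where $N,N'$ are the arc-midpoints $\ell\cap\Omega$, which are antipodal on $\Omega$. The crux is that $F,F'$ are also antipodal: applying the engine identity first to $g_{BCD}$ gives $g_{BCD}(N)\perp g_{BCD}(N')$, and applying it to $g_{ABC}$ in the reverse direction (so that $\angle F'AF=\angle(g_{ABC}(F),g_{ABC}(F'))$ equals the angle between these perpendicular directions $=90^{\circ}$) forces $FF'$ to be a diameter by the converse of Thales. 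With $F'=F^{*}$, Lemma~\ref{th4} applied to the rectangular hyperbola $\mathcal{H}_2$ through the triangle $BCF$ inscribed in $\Omega$, whose fourth intersection with $\Omega$ is $F'$, shows the center of $\mathcal{H}_2$ is the midpoint of the orthocenter of $\triangle BCF$ and $F'$; exactly as for $\mathcal{H}_1$, this midpoint is $M$. The finitely many degenerate positions (a vertex lying on $\mathcal{H}_1$, or coincidences among $B,C,F,F'$) are absorbed by the continuity convention of the preceding Remark, which completes the proof.
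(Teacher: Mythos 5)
Your proposal is correct and follows essentially the same route as the paper's own proof: both establish that $\mathcal{H}_1$ and $\mathcal{H}_2$ are rectangular via isogonal images of antipodal points on $\Omega$, identify the fourth intersections with $\Omega$ (the antipode $D^{*}$ of $D$ for $\mathcal{H}_1$, and an antipodal pair — your $F,F'$, the paper's $K,L$ — for $\mathcal{H}_2$), and then apply Lemma~\ref{th4} twice. The only difference is cosmetic and in your favor: at the last step you get the center of $\mathcal{H}_2$ directly as the midpoint of the orthocenter of $\triangle BCF$ and $F'$ via the reflection-of-the-orthocenter fact, whereas the paper constructs that orthocenter explicitly through reflections $K',K''$ and concludes by showing the midpoint lies on $BC$.
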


\begin{rem}
    To finish the proof of Theorem \ref{themG}, we only really need the part that $M_{BC}$ is the center of $\mathcal{H}_2$, but for completeness, we include the other result too.
\end{rem}

   \begin{proof}
       First, it is clear that $\mathcal{H}_1$ is a rectangular hyperbola -- the intersections of $\ell$ with $\Omega$ are antipodal on $\Omega$, hence it follows that the asymptotes of $\mathcal{H}_1$ are perpendicular. Let $\mathcal{H}_1$ intersect $\Omega$ for the fourth time at $D'$, and let $H$ be the orthocenter of $\triangle BCD$. Notice that $D'$ is the antipode of $D$ on $\Omega$, as $\infty_{D'}$ -- the isogonal conjugate of $D'$ with respect to $\triangle BCD$ is the point at infinity of $\ell$, and as $\ell \perp BC \implies D\infty_D \perp BC \implies H \in D\infty_D $, we must have $DD'$ passing through the center of $\Omega$ (in the exceptional case where $D \in \ell$, its isogonal conjugate will be a line. For our lemma to make sense, in such case we assume that the isogonal conjugate of $D$ with respect to $\triangle BCD$ is the whole line $BC$, and $\mathcal{H}_1$ degenerates to two perpendicular lines -- namely $BC$ and $\ell$). From Lemma \ref{th4}, the center of $\mathcal{H}_1$ is the midpoint of $HD'$, but it is clearly also the midpoint of $BC$. 

       Now, as $\mathcal{H}_1$ is a conic passing through $B$ and $C$, $\mathcal{H}_2$, being its isogonal conjugate with respect to $\triangle ABC$, must also be a conic passing through $B$ and $C$ (in the exceptional case where $A \in \mathcal{H}_1$, we again assume that $\mathcal{H}_2$ degenerates to two perpendicular lines -- the same as before). Isogonal conjugates of $D$ and $D'$ with respect to $\triangle ABC$ lie on $\mathcal{H}_2$ and are at infinity. Because $AD \perp AD'$ (as $DD'$ is the diameter of $\Omega$), we get that $\mathcal{H}_2$ is also a rectangular hyperbola. Now label the points at infinity of $\mathcal{H}_1$ as $\infty_K$ and $\infty_L$, and let $K$ and $L$ be their isogonal conjugates with respect to $\triangle ABC$, respectively. It is clear that $K$ and $L$ lie on $\Omega$ as well as on $\mathcal{H}_2$. $K$ and $L$ are also the antipodes of each other on $\Omega$, because 
       \begin{equation}
            \arcangle KAL = \arcangle \infty_KA\infty_L = 90^{\circ}.
       \end{equation}
       Let $K'$ be the reflection of $K$ in $\ell$. Now $KK' \parallel BC$ and $\arcangle KK'L = 90^{\circ} \implies K'L \perp BC$. Let $K''$ be the reflection of $K'$ in $BC$. It follows that $K''$ is the orthocenter of $\triangle BCL$, and from Lemma \ref{th4}, the midpoint of $KK''$ is the center of $\mathcal{H}_2$. As $BC$ passes through the midpoint of $K'K''$ and $BC \parallel KK'$, we get that the midpoint of $KK''$ lies on $BC$.
   \end{proof}

    A more synthetic proof of the main result can be seen below.

    \begin{proof}

     Consider the isogonal conjugate of the circumconic $ABCDQ_D$ with respect to $\triangle ABC$. It will be a line, denoted $k$, through $P_D$ and $\infty_D$, where $\infty_D$ is the isogonal conjugate of $D$ with respect to $\triangle ABC$. Similarly define the line $\ell$ through $P_A$ and $\infty_A$, where $\infty_A$ is the isogonal conjugate of $A$ with respect to $\triangle BCD$. 

     Repeating the same treatment as at the start of the first proof of Theorem \ref{themG}, we get that $k$ and $\ell$ are reflections of each other in the perpendicular bisector of $BC$.

    \begin{figure}[ht]
    \centering
    \begin{tikzpicture}[line cap=round,line join=round,>=triangle 45,x=1cm,y=1cm]
\clip(-6.034414809464163,-3.982528704228925) rectangle (6.020360158233172,4.23827496267534);
\draw [fill = LightBlue, opacity = 1] (-2.714896924246057,-1.6215223374099108) -- (-1.8624646656185977,2.555626218624744)  -- (1,3) -- (2.7148969242460566,-1.6215223374099108) -- cycle;
\draw [line width = 0.5pt, fill = orange, opacity = 0.2] (0,0) circle (3.1622776601683795cm);
\draw [line width = 0.5pt] (-1.8624646656185977,2.555626218624744)-- (1,3);
\draw [line width = 0.5pt] (1,3)-- (2.7148969242460566,-1.6215223374099108);
\draw [line width = 0.5pt] (2.7148969242460566,-1.6215223374099108)-- (-2.714896924246057,-1.6215223374099108);
\draw [line width = 0.5pt] (-2.714896924246057,-1.6215223374099108)-- (-1.8624646656185977,2.555626218624744);
\draw [line width = 0.5pt,domain=-6.034414809464163:0] plot(\x,{(-0-0.6874184561950774*\x)/-1.8624646656185972});
\draw [line width = 0.5pt,domain=0:6.020360158233172] plot(\x,{(-0-0.24304467481982112*\x)/1});
\draw [line width = 0.5pt] (0,-3.982528704228925) -- (0,4.23827496267534);
\draw [line width = 0.5pt,domain=-6.034414809464163:6.020360158233172] plot(\x,{(--3.0317677782310493--0.4443737813752562*\x)/0.8624646656185977});
\draw [line width = 0.5pt,domain=-6.034414809464163:6.020360158233172] plot(\x,{(--3.031767778231049-0.4443737813752566*\x)/0.8624646656185975});

\draw [line width=1pt,domain=-6.034414809464163:6.020360158233172, red] plot(\x,{(--0.593159607754542--0.5066219162802141*\x)/0.9832792120799909});
\draw [line width=1pt,domain=-6.034414809464163:6.020360158233172, red] plot(\x,{(--0.520278672095901-0.4443737813752566*\x)/0.8624646656185975});

\draw [fill=black] (-2.714896924246057,-1.6215223374099108) circle(1pt);
\draw[color=black, yshift = -15, xshift = -10] (-2.6210986670653167,-1.3684573627791037) node {$B$};
\draw [fill=black] (-1.8624646656185977,2.555626218624744) circle(1pt);
\draw[color=black, xshift = -5] (-1.7677696314656055,2.81405678354061) node {$A$};
\draw [fill=black] (1,3) circle(1pt);
\draw[color=black] (1.0926854456151178,3.258749379557361) node {$D$};
\draw [fill=black] (2.7148969242460566,-1.6215223374099108) circle(1pt);
\draw[color=black, yshift = -10, xshift = 2] (2.811362235625805,-1.3684573627791037) node {$C$};
\draw [fill=black] (0,0) circle(1pt);
\draw[color=black, xshift = -8] (0.09513178428024374,0.2540696767414749) node {$O$};
\draw [fill=black] (-1.8624646656185972,-0.6874184561950774) circle(1pt);
\draw[color=black, yshift = -15] (-1.7317134750318148,-0.39494113906675654) node {$H_D$};
\draw [fill=black] (1,-0.24304467481982112) circle(1pt);
\draw[color=black, yshift = -5] (0.9364421010686919,-0.38292242025549295) node {$H_A$};
\draw [fill=black] (2.2162497436861823,-0.5386476982737202) circle(1.5pt);
\draw[color=black] (1.9700519188373566,-0.6473342341032909) node {$P_A$};
\draw [fill=black] (-4.127686837801787,-1.5234909773474912) circle(1.5pt);
\draw[color=black] (-4.339775457076004,-1.0920268301200422) node {$P_D$};
\draw [fill=black] (-1,3) circle(1pt);
\draw[color=black] (-0.8543470018095762,3.258749379557361) node {$D'$};
\draw [fill=black] (1.8624646656185975,2.5556262186247434) circle(1pt);
\draw[color=black] (2.0061080752711473,2.81405678354061) node {$A'$};
\draw[color=black, xshift =15, yshift = -5] (-5.890190183729001,-2.1977489607562886) node {$k$};
\draw[color=black,yshift = -4] (-5.890190183729001,3.5952735062727403) node {$\ell$};
\draw [fill=red] (3.015985308264658,-0.9507011204010112) circle(1.5pt);
\draw[color=red, yshift = -20] (3.1478863623411835,-0.6593529529145545) node {$L_1$};
\draw [fill=red] (-2.524760576000225,1.9040966450984083) circle(1.5pt);
\draw[color=red, yshift = -20, xshift = -5] (-2.39274300965131,2.201102124166169) node {$L_2$};
\draw [fill=red] (2.5247605760002267,1.9040966450984054) circle(1.5pt);
\draw[color=red, yshift = -18] (2.655118891079378,2.201102124166169) node {$K_2$};
\draw [fill=red] (-3.0159853082646575,-0.9507011204010115) circle(1.5pt);
\draw[color=red] (-3.3782779521749204,-0.6593529529145545) node {$K_1$};
\end{tikzpicture}
    \caption{$k$ and $\ell$ are reflections of each other in the perpendicular bisector of $BC$. }
    \label{kazik2}
\end{figure}
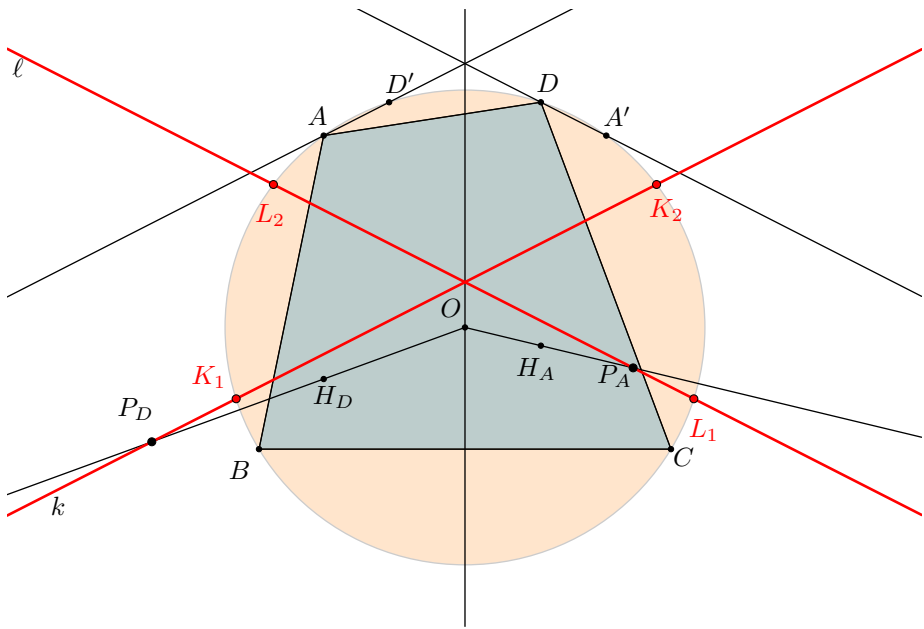

    We have to prove that the isogonal conjugate of $\ell$ with respect to $\triangle BCD$ and the isogonal conjugate of $k$ with respect to $\triangle ABC$ are the same conic. To do this, we first assume that the conics are hyperbolas, that is, lines $k$ and $\ell$ intersect the circle on $ABCD$, each at two points. 

    As the aforementioned isogonal conjugates of $k$ and $\ell$ share points $A,B,C,D$, it is sufficient to prove that they have the same points at infinity. Let $k$ intersect $\Omega$ at $K_1$ and $K_2$, and let $\ell$ intersect $\Omega$ at $L_1$ and $L_2$ (Fig. \ref{kazik2}).

     As $k$ and $\ell$ are reflections of each other in the perpendicular bisector of $BC$, we can see that $K_1K_2L_1L_2$ is a trapezoid with its bases parallel to $BC$. Without loss of generality, assume that $K_1$ and $L_2$ lie on the same side of the aforementioned bisector. Now because arcs $A'K_2$ and $AL_2$ have the same length and $L_1L_2 \parallel DA'$, we get that $DK_2 \parallel AL_1$. Now from the trapezoid, the line $DK_2$ is clearly the isogonal conjugate of the line $DL_2$ in the angle $BDC$, and $AL_1$ is the isogonal conjugate of the line $AK_1$ in the angle $BAC$, hence, from $DK_2 \parallel AL_1$, the isogonal conjugate of $L_2$ in $BCD$ is the same point at infinity as the isogonal conjugate of $K_1$ in $ABC$, which proves our claim.

    Now there are clearly infinitely many instances of $P_A$, so that $k$ and $\ell$ have intersections with $\Omega$ (when $P_A$ is inside the circle), hence, we have proven our Theorem for infinitely many cases. To get from this to the Theorem for all the cases, we can use a \emph{polynomial moving points} argument. When $P_A$ moves on $OH_A$, $Q_A$, being its isogonal conjugate, by Lemma \ref{th2}, moves on a conic $\Gamma_1$, and the map 
    \begin{equation}
       P_A(OH_A) \longmapsto Q_A(\Gamma_1) 
    \end{equation}
     is projective. Similarly $Q_D$ also moves on a conic $\Gamma_2$, and the map 
     \begin{equation}
         P_A(OH_A) \longmapsto P_D(OH_D) \longmapsto Q_D(\Gamma_2)
     \end{equation}
      is also projective. Now we get that the map
    \begin{equation}
    Q_A(\Gamma_1) \longmapsto P_A(OH_A) \longmapsto P_D(OH_D) \longmapsto Q_D(\Gamma_2)
    \end{equation}
     is projective, since projective maps are bijective. Therefore we can express coordinates of the point $Q_D$, as a polynomial of coordinates of the point $Q_A$, which lies on a fixed conic. Thus, to prove that $A,B,C,D,Q_A,Q_D$ lie on a single conic it is sufficient to check some finite number of cases, but we have infinitely many.   \end{proof}

\subsection{A more algebraic approach}

As it turns out this result also comes about purely projectively, using techniques well known in algebraic geometry. To proceed with the proof, consider the space of all conics considered as homogenous forms of degree $2$. Conics passing through any four non-collinear points $P_1$, $P_2$, $P_3$, $P_4$ form a line in that space; let us call this line $C(P_1,P_2,P_3,P_4) \cong \mathbb{P}^1$. We will need the following Lemma.

\begin{lem} \label{lemks}
        		Let $ABC$ be a triangle, $\ell$ be a line different from the line in infinity that passes through at most one vertex of the triangle, $X$ be a point in the plane not lying on any of the triangle's sides such that its isogonal conjugate does not lie on $\ell$. Then there exists a projectivity from $\ell$ to $C(A,B,C,X)$ that maps each point possessing an isogonal conjugate to a conic passing through the isogonal conjugate, which moreover satisfies the two following conditions:
                \begin{enumerate}
                    \item if $X$ lies on the circumcircle of $ABC$, the image of the point in infinity is the circumcircle of $ABC$,
                    \item if $\ell$ passes through the circumcenter $O$ of $\triangle ABC$ then the image of $O$ is a rectangular hyperbola (perhaps one reducing to two perpendicular lines).
                \end{enumerate}
        \end{lem}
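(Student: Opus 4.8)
The plan is to factor the asserted projectivity as a composition of two maps. Write $\iota$ for isogonal conjugation with respect to $\triangle ABC$, and let $\Gamma=\iota(\ell)$. By Lemma~\ref{th3}, since $\ell$ passes through at most one vertex, $\Gamma$ is either a circumconic of $\triangle ABC$ (if $\ell$ misses all three vertices) or a line through a single vertex (if it meets exactly one), and in both cases the restricted map $Y(\ell)\longmapsto \iota(Y)\,(\Gamma)$ is a projectivity. The second ingredient is the \emph{pencil map} $\pi\colon\Gamma\to C(A,B,C,X)$ sending a point $Z\in\Gamma$ to the unique member of the pencil through $Z$. I would then define the sought projectivity as $\phi=\pi\circ\iota|_{\ell}$; by construction it sends a point $Y\in\ell$ admitting an isogonal conjugate to a conic of the pencil passing through $\iota(Y)$, exactly as required.

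First I would verify that $\pi$ is itself a projectivity by a degree count. The pencil $C(A,B,C,X)$ defines a rational map from the plane to $\mathbb{P}^1$ whose fibres are its members; restricting to $\Gamma$, which is smooth and rational, this rational map extends to a morphism $\pi\colon\Gamma\to\mathbb{P}^1$. A general member $\mathcal{C}$ of the pencil meets $\Gamma$ in $\deg\mathcal{C}\cdot\deg\Gamma$ points by B\'ezout. When $\Gamma$ is a conic this number is $4$, of which $A,B,C$ are forced (transversally, for a general member), leaving exactly one mobile intersection; when $\Gamma$ is a line it is $2$, of which the single vertex on $\ell$ is forced, again leaving exactly one. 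In either case $\pi$ has degree $1$, hence is an isomorphism of $\mathbb{P}^1$'s, i.e.\ a projectivity. The count can only fail if $\Gamma$ is itself a member of the pencil, i.e.\ $X\in\Gamma$; but $X\in\Gamma=\iota(\ell)$ would force $\iota(X)\in\ell$, which the hypothesis forbids. This is precisely where the assumption that the isogonal conjugate of $X$ avoids $\ell$ enters. I expect the main obstacle to be exactly this non-degeneracy bookkeeping: confirming that $\pi$ is genuinely defined (as a morphism) at the base points lying on $\Gamma$, that the intersections at $A,B,C$ really contribute multiplicity one for a general member, and that the two cases $\Gamma$-conic and $\Gamma$-line are handled uniformly.

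It remains to pin down the two distinguished images. For (1), the classical fact that $\iota$ carries $\ell_\infty$ onto the circumcircle of $\triangle ABC$ gives $\iota(\infty_\ell)$ on the circumcircle; when $X$ also lies on the circumcircle, the circumcircle is a member of the pencil $C(A,B,C,X)$, so the member through $\iota(\infty_\ell)$, namely $\phi(\infty_\ell)$, is the circumcircle itself. For (2), if $O\in\ell$ then $\iota(O)=H$, the orthocenter, since $O$ and $H$ are isogonal conjugates; thus $\phi(O)$ is the member of the pencil through $H$, a circumconic of $\triangle ABC$ passing through its orthocenter. It is classical (cf.\ the discussion in the introduction) that a circumconic of a triangle is a rectangular hyperbola precisely when it passes through the orthocenter, degenerating to two perpendicular lines in the exceptional configurations, which is the assertion. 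Throughout, degenerate positions, where $\iota(Y)$ meets a base point or the five points fail to be in general position, are absorbed by the fact that $\phi$, being a projectivity, is defined on all of $\ell$ by continuity, even though the five-point description only literally applies off the sidelines.
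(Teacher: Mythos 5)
Your proposal is correct, and its skeleton matches the paper's: both factor the construction through $\Gamma=\iota(\ell)$ using Lemma~\ref{th3}, both use the hypothesis that the isogonal conjugate of $X$ avoids $\ell$ in exactly the same way (it is equivalent to $X\notin\Gamma$, which is what keeps the construction non-degenerate), and both settle claims (1) and (2) from the classical facts that $\iota$ maps $\ell_\infty$ to the circumcircle and $O$ to $H$, deferring the exceptional positions ($\ell$ parallel to a side; right-angled $ABC$) to a continuity argument, just as the paper explicitly does. The genuine difference is how the point-to-pencil-member correspondence is shown to be projective. You work on the curve side: the pencil's rational map restricted to $\Gamma$ extends to a morphism $\pi\colon\Gamma\to\mathbb{P}^1$, and a B\'ezout count (four intersections, three absorbed transversally at $A,B,C$ for a general member; two and one in the line case) gives $\deg\pi=1$, hence an isomorphism. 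The paper works on the pencil side and in the opposite direction: restricting the quadratic forms of $C(A,B,C,X)$ to $\Gamma$ and pulling back along $\gamma=\iota|_{\ell}$ is a linear map into the space of quartic forms on $\ell$, injective precisely because $X\notin\Gamma$; every quartic in the image has three fixed roots at $\gamma^{-1}(A),\gamma^{-1}(B),\gamma^{-1}(C)$, and Vieta's formulas extract the fourth root projectively, producing the inverse of the desired map. What each buys: the paper's Vieta argument is elementary and deals with the base points for free (they are simply fixed roots of the quartic), and it exhibits the inverse map explicitly; your degree count is shorter and more conceptual, but it leans on heavier standard machinery---extension of rational maps over smooth curves (needed exactly at $A,B,C$, where the naive definition of $\pi$ fails) and the fact that a degree-one morphism of smooth projective curves is an isomorphism---together with the transversality bookkeeping you flag. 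That bookkeeping does check out: no member of the pencil is singular at a vertex (since $X$ avoids the sidelines), so only finitely many members are tangent to $\Gamma$ at $A$, $B$, or $C$; also, when $\ell$ misses all vertices, $\Gamma$ is automatically irreducible, so smoothness is not an extra assumption. One small imprecision: in the line case the failure mode is ``$\Gamma$ is a \emph{component} of a (degenerate) member,'' not ``$\Gamma$ is a member''; this too is excluded by the hypotheses, since $\Gamma=AB$ or $AC$ would force $\ell$ to contain two vertices, and $\Gamma=AX$ again reduces to $\iota(X)\in\ell$.
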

        \begin{proof}
        		First consider the case when $\ell$ does not pass through any of the triangle's vertices. 
                Take the conic $\Gamma$ and the parametrisation $\gamma: \ell \rightarrow \Gamma$ from Lemma \ref{th2}; had $X$ lied on $\Gamma$, it would be the image of some point $X'$ in $\gamma$. It is easy to see $X'$ must not lie on any of the triangle's sides, meaning that $X$ is the isogonal conjugate of $X'$, a contradiction. 
        		%Consider the parametrisation $l \rightarrow \Gamma$ given by Lemma 2; we also have a reverse map. x
                The parametrisation $\gamma$ induces a map in the reverse direction: from the space of homogenous forms of degree $2$ on $\mathbb{P}^2$ restricted to $\Gamma$ to the space $H^4(2)$ of homogenous forms of degree $4$ on $\ell$. In particular, we have a map $\phi$ from $C(A, B, C, X)$ to $H^4(2)$. $0$ does not lie in the image of $\phi$, as $X$ does not lie on $\Gamma$ -- hence $\phi$ is a well-defined projectivity from $C(A, B, C, X)$ to $\text{Im } \phi$. Now, three of the roots of any form in the image of $\phi$ correspond to the preimages of $A$, $B$, $C$ in the aforementioned parametrisation; we can compute the fourth root by Viete's formulas, inducing a projective map $\psi$ from $\text{Im } \phi$ to $\ell$. Both $\psi$ and $\phi$ are projectivities -- denote the inverse of their composition by $f$. By our construction, it is a map from $\ell$ to $C(A, B, C, X)$ that sends every point to the conic passing through its image in $\gamma$; thus $f$ is the sought after projectivity. 
                
        		In the case where $\ell$ passes through one of the triangle's vertices (say $A$), the reasoning is very similiar except we directly restrict conics to the line $\Gamma$ given by Lemma 2.2. We know one of the resulting form's roots will be $A$, and we extract the second one by Vieta's formulas as above. 
                
                The last two claims are easy to prove synthetically except two nuances; the first proposition has a slight problem when $\ell$ is parallel to one of the triangle's sides, and the second -- when $ABC$ is right-angled. These issues can be addressed manually or with an argument by continuity.   
        \end{proof}

    Now we can give another proof of the main result of this paper.
    \begin{proof}
        We will assume the isogonal conjugate of $A$ with respect to $\triangle BCD$ does not lie on $OH_A$ and similarly for $D$; otherwise, it is not hard to see the conic degenerates to a rectangular hyperbola through $A$, $B$, $C$, $D$. 
        
        By $\infty_{H_A H_D}$ denote the point at infinity on line $H_A H_D$. We will use our previous Lemma for the line $OH_A$, the triangle $BCD$ and the point $A$, as well as the line $OH_D$, the triangle $ABC$ and the point $D$; call the obtained projectivities $\phi_A$ and $\phi_D$.  Consider the following diagram:
        \[\begin{tikzcd}
OH_A \arrow[r, "\phi_A"] \arrow[d, "\pi_{\infty_{H_A H_D}}"] & C(A,B,C,D) \\
\mathcal{L}(\infty_{H_A H_D}) \arrow[r, "\pi_{OH_D}"] & OH_D \arrow[u, "\phi_D"]
\end{tikzcd}\]
where $\pi_{\infty_{H_A H_D}}$ is a map which assigns each point $X$ on $OH_A$ the line $\infty_{H_A H_D}$ and $\pi_{OH_D}$ is a map which assigns each line through $\infty_{H_A H_D}$ its intersection with $OH_D$. If the diagram commutes, then applying the relevant maps to $P_A$ will get us that the conic passing through its isogonal conjugate is the same as the conic passing through the isogonal conjugate of $P_D$, ending the proof. But all maps on the diagram are projectivities; hence it suffices to prove that $\phi_A(X)=\phi_D(\pi_{OH_D}(\pi_{\infty_{H_A H_D}}(X))$ for three cases of $X$. %\newline
\begin{enumerate}
\item For $X$ at infinity, $\pi_{\infty_{H_A H_D}}(X)$ is at infinity as well; by Lemma \ref{lemks}, both conics are the circumcircle. %
\item For $X$ at $H_A$, $\phi_A(X)$ passes through $A$, $B$, $C$, $D$, $O$; there is exactly one such conic, and it must also be equal to $\phi_D(\pi_{\infty_{H_A H_D}}(X))$, ending the proof for this case.
\item For $X$ at $O$, $\phi_A(X)$ is a rectangular hyperbola by Lemma \ref{lemks}. The same follows for $\phi_D(\pi_{\infty_{H_A H_D}}(X))$; both conics pass through $A$, $B$, $C$, $D$, implying they are one and the same. \qedhere
\end{enumerate}
\end{proof}
    
\section{Connection to the Encyclopedia of Triangle Centers}

\subsection{Centers with constant Shinagawa coefficients}

Theorem~\ref{themG} holds for any point $P_D$ on the Euler line, which in turn establishes a ratio $P_{D}H_{D} : P_{D}O = \lambda_0$. Since this condition is satisfied by triangle centers such as the orthocenter, it is natural to investigate the cases where $P_{A}, P_{B}, P_{C}, P_{D}$ correspond to other triangle centers. To this end, we consult the Encyclopedia of Triangle Centers (henceforth ETC)~\cite{kimberling}.

Let $X_n$ denote the $n$-th Kimberling center cataloged in the ETC. We consider the subset of these centers that lie on the Euler line. The homogeneous barycentric coordinates of such points, relative to the affine frame defined by triangle $ABC$, are conveniently expressed using Shinagawa coefficients~\cite{kimberling}. We use the convention where $A = (1:0:0), B=(0:1:0), C=(0:0:1)$, and the Conway symbols for $\triangle ABC$ with side lengths $a,b,c$ are given by $S = 2 \cdot \text{Area}(\triangle ABC)$ and $S_A = \frac{1}{2}(b^2+c^2-a^2)$, with $S_B$ and $S_C$ defined cyclically~\cite{yiu}.

\begin{defn}[Shinagawa coefficients]
Let $X_n$ be a triangle center on the Euler line. Its homogeneous barycentric coordinates are expressed as $(f(a,b,c):f(b,c,a):f(c,a,b))$, where $f$ is the triangle center function of $X_n$~\cite{kimberling}. The \textit{Shinagawa coefficients} of $X_n$ is the pair of functions $(G(a,b,c), H(a,b,c))$ that satisfy the relation:
\begin{equation} \label{eq:shinagawa}
f(a,b,c) = G(a,b,c) \cdot S^2 + H(a,b,c) \cdot S_BS_C,
\end{equation} and analogous cyclic conditions hold.
\end{defn}

\begin{rem}
    Note that Shinagawa coefficients are homogeneous.
\end{rem}

It is often the case that the Shinagawa coefficients $(G(a,b,c), H(a,b,c))$ are constants for all possible $a, b, c$. For example, the coefficients of the orthocenter, $X_4$, are $(0,1)$ (see \cite{shinagawa} for more examples).

In this context, we claim that our points $P_A, P_B, P_C, P_D$ are always centers with constant Shinagawa coefficients. To establish this, we first present the following lemma from \cite{wyrzyk1}, where lowercase letters of points denote their positional vectors.

\begin{lem}\label{conversion}
    Let $ABC$ be a triangle on the unit circle with center $O$ and orthocenter $H$. Let $X$ be a center with constant Shinagawa coefficients $(u,v)$. Then there exists a unique scalar $\lambda \in \mathbb{R} \cup \{ \infty \}$ such that $x = \lambda h$, where
    \begin{equation}
        \lambda = \frac{u+v}{3u+v}.
    \end{equation} 
\end{lem}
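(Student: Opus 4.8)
The plan is to pass from the Shinagawa description of $X$ directly to its position vector and read off $\lambda$. Writing the triangle center function as $f(a,b,c) = u\,S^2 + v\,S_B S_C$, the homogeneous barycentric coordinates of $X$ are $f_A = u S^2 + v S_B S_C$, $f_B = u S^2 + v S_C S_A$, $f_C = u S^2 + v S_A S_B$ (cyclically). Since $O$ is the center of the unit circle and we place it at the origin, the position vector of $X$ is the normalized affine combination
\[
x = \frac{f_A\,\vec{A} + f_B\,\vec{B} + f_C\,\vec{C}}{f_A + f_B + f_C},
\]
where $\vec A,\vec B,\vec C$ denote the position vectors of the vertices. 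The whole computation then rests on two classical identities, which I would invoke as the backbone of the argument.

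First I would handle the denominator. Using the Conway identity $S_A S_B + S_B S_C + S_C S_A = S^2$, the sum collapses cleanly:
\[
f_A + f_B + f_C = 3u\,S^2 + v\,(S_A S_B + S_B S_C + S_C S_A) = (3u+v)\,S^2.
\]
This already isolates the denominator $3u+v$ appearing in the claimed formula, and signals that the exceptional value $\lambda = \infty$ corresponds exactly to $3u+v = 0$, i.e.\ to $X$ being the point at infinity of the Euler line.

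Next I would treat the numerator using two vector facts valid when the circumcenter is the origin: the Euler identity $\vec A + \vec B + \vec C = h$, and the fact that the orthocenter has unnormalized barycentric coordinates $(S_B S_C : S_C S_A : S_A S_B)$ with coordinate sum exactly $S^2$, which rearranges to $S_B S_C\,\vec A + S_C S_A\,\vec B + S_A S_B\,\vec C = S^2\,h$. Substituting both gives
\[
f_A \vec A + f_B \vec B + f_C \vec C = u S^2 (\vec A + \vec B + \vec C) + v\,S^2\,h = (u+v)\,S^2\,h,
\]
so that $x = \dfrac{(u+v)S^2}{(3u+v)S^2}\,h = \dfrac{u+v}{3u+v}\,h$. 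Uniqueness of $\lambda$ is then immediate, since $h \neq 0$ for a non-equilateral triangle, and the degenerate case $3u+v=0$ is read off as $\lambda = \infty$.

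The computation is short, so the real care lies elsewhere. The main obstacle is bookkeeping rather than depth: the symbols $a,b,c$ serve both as side lengths (inside $S$, $S_A$, $S_B$, $S_C$) and, per the paper's convention, as names of position vectors, so I would fix unambiguous notation ($\vec A,\vec B,\vec C$ for the vertices) at the outset. I would also make sure the two invoked identities are stated in the precise normalization used here --- in particular that the orthocenter's unnormalized barycentrics sum to exactly $S^2$ and not to some scalar multiple --- since any stray constant would corrupt the ratio. A quick sanity check against known centers ($X_4$ giving $\lambda=1$, $X_3$ with $(u,v)=(1,-1)$ giving $\lambda=0$, and the centroid with $(u,v)=(1,0)$ giving $\lambda=\tfrac13$) confirms both the formula and the sign conventions before committing to the final writeup.
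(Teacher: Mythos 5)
Your proof is correct, and in fact it supplies something the paper itself omits: the paper states Lemma~\ref{conversion} without proof, importing it from the reference \cite{wyrzyk1}, so there is no in-paper argument to compare against. Your computation is the natural one and is complete. The conversion from homogeneous barycentrics to position vectors with the circumcenter at the origin, the Conway identity $S_AS_B+S_BS_C+S_CS_A=S^2$ collapsing the denominator to $(3u+v)S^2$, and the two vector facts $\vec A+\vec B+\vec C=h$ and $S_BS_C\,\vec A+S_CS_A\,\vec B+S_AS_B\,\vec C=S^2h$ for the numerator are all standard and correctly normalized --- in particular the orthocenter's unnormalized coordinates $(S_BS_C:S_CS_A:S_AS_B)$ really do sum to $S^2$, again by the same Conway identity, so no stray constant enters the ratio. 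You also handle the two genuine edge cases properly: $3u+v=0$ makes the coordinate sum vanish, so $X$ is the infinity point of the Euler line and $\lambda=\infty$; and uniqueness of $\lambda$ needs $h\neq 0$, i.e.\ a non-equilateral triangle, a hypothesis the paper imposes globally in Section~2. As a side benefit, your argument proves, rather than presupposes, that every center with constant Shinagawa coefficients lies on the Euler line, which is exactly how the lemma is used in Proposition~\ref{prop:shinagawa_equiv}.
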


\begin{prop}\label{prop:shinagawa_equiv}
    The points $P_{A},\; P_{B},\; P_{C},\; P_{D}$ satisfy the condition in \eqref{eq2} if and only if they correspond to the same triangle center with constant Shinagawa coefficients in their respective triangles. 
\end{prop}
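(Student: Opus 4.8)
The plan is to reduce the entire statement to the single scalar $\lambda$ furnished by Lemma~\ref{conversion}, exploiting the structural fact that the four triangles $\triangle ABC$, $\triangle BCD$, $\triangle ACD$, $\triangle ABD$ all share the \emph{same} circumcircle $\Omega$ and hence the \emph{same} circumcenter $O$. First I would place $O$ at the origin and normalise $\Omega$ to the unit circle; this is without loss of generality, since a similarity leaves every ratio in \eqref{eq2} unchanged and preserves the notion of a center with constant Shinagawa coefficients. After this normalisation each of the four triangles is inscribed in the unit circle, so Lemma~\ref{conversion} applies to all of them \emph{simultaneously}, with the common origin $O$ and the orthocenters $h_D=a+b+c$, $h_A=b+c+d$, and so on given as sums of the relevant vertex vectors.

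Next I would parametrise each $P_X$. Because $P_X$ lies on the Euler line of its triangle, which is exactly the line through the origin $O$ and the orthocenter $H_X$, I may write the position vector $p_X=\lambda_X h_X$ for a unique scalar $\lambda_X\in\mathbb{R}\cup\{\infty\}$ (here the hypothesis that no sub-triangle is equilateral guarantees $h_X\neq 0$). A one-line computation then yields the directed ratio along the Euler line,
\begin{equation}
\frac{\overline{P_X H_X}}{\overline{P_X O}}=\frac{h_X-p_X}{\,0-p_X\,}=\frac{(1-\lambda_X)h_X}{-\lambda_X h_X}=\frac{\lambda_X-1}{\lambda_X},
\end{equation}
so the ratio appearing in \eqref{eq2} is a \emph{bijective} (Möbius) function of $\lambda_X$ alone. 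Read with directed segments, \eqref{eq2} says $\frac{\lambda_X-1}{\lambda_X}=r$ for a common value $r$, and this forces $\lambda_X=\tfrac{1}{1-r}$ to be one and the same scalar $\lambda$ for every $X$; conversely, a common $\lambda$ makes the ratio automatically constant.

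It then remains to translate ``common $\lambda$'' into ``same constant-Shinagawa center.'' For this I would note that the assignment $(u:v)\mapsto\lambda=\frac{u+v}{3u+v}$ of Lemma~\ref{conversion} is a fractional-linear map of $\mathbb{P}^1$ whose matrix has determinant $1-3=-2\neq 0$, hence is a bijection; a value of $\lambda$ therefore determines a unique ratio $(u:v)$ of Shinagawa coefficients, and conversely. Since the Shinagawa coefficients are intrinsic to the center function and independent of the individual triangle, the constant-Shinagawa-$(u:v)$ center of each of the four triangles is, by Lemma~\ref{conversion}, precisely the point with position vector $\lambda h_X=p_X$. Thus ``all four $\lambda_X$ coincide'' is equivalent to ``all four $P_X$ are the same center $(u:v)$ in their respective triangles,'' which settles both implications at once.

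The step I expect to be the main obstacle is the orientation bookkeeping in \eqref{eq2}. Taken literally with unsigned lengths, the equation $\frac{|\lambda-1|}{|\lambda|}=c$ is quadratic in $\lambda$ and generically has two roots, so the naive unsigned ratio does \emph{not} pin $\lambda$ down; the clean equivalence requires reading \eqref{eq2} as an equality of \emph{directed} ratios along the Euler line (equivalently, selecting $P_X$ consistently relative to the segment $OH_X$, exactly as the construction in Theorem~\ref{themG} implicitly does). I would make this convention explicit and then dispose separately of the boundary values $\lambda=0$ (the circumcenter) and $\lambda=\infty$ (the Euler infinity point), together with the equilateral degeneracy $h_X=0$ already excluded in Theorem~\ref{themG}.
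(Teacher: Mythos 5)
Your proposal is correct and follows essentially the same route as the paper: both arguments reduce \eqref{eq2} to the existence of a common scalar $\lambda$ with $p_X=\lambda h_X$ (using that all four triangles share the circumcenter $O$), and then invoke the bijective correspondence $\lambda \leftrightarrow (u:v)$ from Lemma~\ref{conversion} to identify a common $\lambda$ with a common constant-Shinagawa center. Your explicit treatment of the directed-ratio convention (the paper silently reads \eqref{eq2} as the vector ratio $\frac{p_D-h_D}{p_D}=\lambda_0$, which is exactly your signed reading) and of the bijectivity of the Möbius map $(u:v)\mapsto\frac{u+v}{3u+v}$ are refinements of, not departures from, the paper's argument.
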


\begin{proof}
Assume the condition in \eqref{eq2} holds. This gives a constant $\lambda_0$ such that
\begin{equation}
    \frac{p_D-h_D}{p_D} = \lambda_0,
\end{equation}
which implies $p_D = \lambda h_D$ for $\lambda = (1-\lambda_0)^{-1}$. The condition ensures this same $\lambda$ applies to all four points, so $p_X = \lambda h_X$ for $X \in \{A,B,C,D\}$. By Lemma~\ref{conversion}, this single $\lambda$ determines a unique ratio of Shinagawa coefficients $(u,v)$:
\begin{equation}\label{eq:uv_ratio}
    \frac{u}{v} =  \frac{1-\lambda}{3\lambda - 1}.
\end{equation}
Since this ratio is the same for all four points, they must correspond to the same center with constant Shinagawa coefficients.

Conversely, assume $P_A, P_B, P_C, P_D$ are the same center with constant Shinagawa coefficients $(u,v)$. By Lemma~\ref{conversion}, these coefficients determine a unique scalar $\lambda$ via \eqref{eq:uv_ratio}. This implies $p_X = \lambda h_X$ for each point, which is equivalent to the ratio condition in \eqref{eq2}.
\end{proof}

 Following Proposition \ref{prop:shinagawa_equiv} the isogonal conjugates of centers with constant Shinagawa coefficents are excatly the family of points that satisfy the main result of this paper, meaning it can be rephrased as follows.

\begin{thm}\label{them1} Assume $X$ is a triangle center with constant Shinagawa coefficients $(u,v)$. Let $ABCD$ be a cyclic quadrilateral, by $X_A, X_B, X_C, X_D$ denote the $X$ points of $\triangle BCD$, $\triangle ACD$, $\triangle ABD$ and $\triangle ABC$, respectively. Furthermore by $Y_A$ denote the isogonal conjugate of $X_A$ with respect to $\triangle BCD$, similarly define $Y_{B},\; Y_{C},\; Y_{D}$. Then $A,\; B,\; C,\; D,\; Y_A,\; Y_{B},\; Y_{C},\; Y_{D}$ lie on a conic, denoted by $\Phi(u,v)$.
\end{thm}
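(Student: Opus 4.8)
The plan is to recognize Theorem \ref{them1} as a direct reformulation of the main result, Theorem \ref{themG}, obtained by translating the metric ratio condition \eqref{eq2} into the language of triangle centers through Proposition \ref{prop:shinagawa_equiv}. No new geometry is required: the entire content is the dictionary already established between ``the same Shinagawa center in every sub-triangle'' and ``constant ratio $P_X H_X : P_X O$.'' The proof will therefore be a short composition of two previously proved statements, with a final remark dealing with degenerate configurations.

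First I would observe that a center $X$ with constant Shinagawa coefficients $(u,v)$ necessarily lies on the Euler line of its triangle. Indeed, by Lemma \ref{conversion} its position vector satisfies $x = \lambda h$ with $\lambda = (u+v)/(3u+v)$ taken relative to the circumcenter $O$, so $X$ lies on the line $OH$. Consequently each of the four points $X_A, X_B, X_C, X_D$ lies on the Euler line of the corresponding sub-triangle $\triangle BCD, \triangle ACD, \triangle ABD, \triangle ABC$, exactly as demanded of the points $P_A, P_B, P_C, P_D$ in the hypotheses of Theorem \ref{themG}.

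Next I would invoke Proposition \ref{prop:shinagawa_equiv}. Since $X_A, X_B, X_C, X_D$ are by construction the same center with constant Shinagawa coefficients in their respective triangles, that proposition guarantees they satisfy the ratio condition \eqref{eq2}. Making the identification $P_A = X_A,\; P_B = X_B,\; P_C = X_C,\; P_D = X_D$, these four points thus form a legitimate choice of the $P$-points appearing in Theorem \ref{themG}, and their isogonal conjugates $Y_A, Y_B, Y_C, Y_D$ coincide with the points $Q_A, Q_B, Q_C, Q_D$ of that theorem. Applying Theorem \ref{themG} directly yields that $A, B, C, D, Y_A, Y_B, Y_C, Y_D$ lie on a single conic, which we label $\Phi(u,v)$ to record its dependence on the chosen center.

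The only points needing care are the degenerate configurations, and I expect the bookkeeping of these to be the sole genuine obstacle. When $3u+v = 0$ the scalar $\lambda$ is infinite and $X$ is the point at infinity of the Euler line, while when a sub-triangle is equilateral its Euler line collapses to a single point at which $O$ and $H$ coincide; in the latter situation the statement reduces to Proposition \ref{orthocenters}. Since such exceptional cases are finite in number and were already addressed in the proofs of Theorem \ref{themG}, they are handled here by the same argument of continuity, and the main deduction remains an immediate composition of Proposition \ref{prop:shinagawa_equiv} with Theorem \ref{themG}.
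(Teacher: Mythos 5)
Your proof is correct and takes essentially the same route as the paper, which derives Theorem~\ref{them1} as an immediate rephrasing of Theorem~\ref{themG} via the equivalence in Proposition~\ref{prop:shinagawa_equiv}. Your additional checks --- that Lemma~\ref{conversion} places a constant-Shinagawa center on the Euler line, and the accounting of degenerate cases --- merely make explicit what the paper leaves implicit.
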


\subsection{A catalog of centers that satisfy the main result}

We now identify all cataloged pairs of centers $X_n$ and their isogonal conjugates $X_n^{\prime}$ that satisfy Theorem \ref{them1}. See Table \ref{tab1}.

\begin{table}[ht]
  \centering
  \begin{tabular}{@{}cc|cc|cc@{}}
      $X_n$ & $X_n^{\prime}$ & $X_n$ & $X_n^{\prime}$ & $X_n$ & $X_n^{\prime}$ \\ \hline
      $X_2$ & $X_6$ & $X_3$ & $X_4$ & $X_{5}$ & $X_{54}$ \\ \hline
      $X_{20}$ & $X_{64}$ & $X_{30}$ & $X_{74}$ & $X_{381}$ & $X_{3431}$ \\ \hline
      $X_{140}$ & $X_{1173}$ & $X_{376}$ & $X_{3426}$ & $X_{547}$ & $X_{57714}$ \\ \hline
      $X_{382}$ & $X_{11270}$ & $X_{546}$ & $X_{57713}$ & $X_{550}$ & $X_{16835}$ \\ \hline
      $X_{548}$ & $X_{57715}$ & $X_{549}$ & $X_{14483}$ & $X_{1656}$ & $X_{13472}$ \\ \hline
      $X_{631}$ & $X_{3527}$ & $X_{632}$ & $X_{57730}$ & $X_{3091}$ & $X_{14528}$ \\ \hline
      $X_{1657}$ & $X_{13452}$ & $X_{3090}$ & $X_{43908}$ & $X_{3523}$ & $X_{52518}$ \\ \hline
      $X_{3146}$ & $X_{3532}$ & $X_{3522}$ & $X_{22334}$ & $X_{3534}$ & $X_{11738}$ \\ \hline
      $X_{3524}$ & $X_{3531}$ & $X_{3529}$ & $X_{43719}$ & $X_{3830}$ & $X_{20421}$ \\ \hline
      $X_{3543}$ & $X_{43713}$ & $X_{3628}$ & $X_{34567}$ & $X_{5071}$ & $X_{44731}$ \\ \hline
      $X_{5054}$ & $X_{14491}$ & $X_{5059}$ & $X_{43691}$ & $X_{12100}$ & $X_{14487}$ \\ \hline
      $X_{8703}$ & $X_{13603}$ & $X_{10304}$ & $X_{14490}$ & $X_{46853}$ & $X_{46851}$ \\ \hline
      $X_{33703}$ & $X_{44763}$ & $X_{33923}$ & $X_{46848}$ & $X_{61138}$ & $X_{61137}$
  \end{tabular}
  \caption{Pairs $(X_n, X_n^{\prime}$), where $X_n$ is a center with constant Shinagawa coefficients.}\label{tab1} 
\end{table}
\begin{rem}
Note that for $n \leq 61371$, there exist $721$ centers with constant Shinagawa coefficients; however, most of them do not have a cataloged isogonal conjugate pair \cite{kimberling, shinagawa, wyrzyk1} (as of 20th of July 2025).
\end{rem}

\subsection{Equation of the eight-point conic family} 

We conclude our study by providing the explicit equation for the conic $\Phi(u,v)$ from Theorem~\ref{them1}. We first recall two results concerning barycentric coordinates~\cite{yiu}.

\begin{lem}\label{lem:conic_eqn}
Let $D = (d:e:f)$ and $X = (p:q:r)$ be two distinct points, with homogeneous barycentric coordinates given relative to the affine frame defined by $\triangle ABC$. The conic passing through the five points $A, B, C, D, X$ has the equation:
\begin{equation}
    \sum_{\text{cyc}} frxy(ep - dq) = 0.
\end{equation}
\end{lem}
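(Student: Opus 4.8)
The plan is to derive the conic through five points $A,B,C,D,X$ by exploiting the fact that $A,B,C$ are the vertices of the reference triangle, so the conic's equation must vanish at these three base points. A general conic in barycentric coordinates has the form $\alpha x^2 + \beta y^2 + \gamma z^2 + 2\delta yz + 2\epsilon zx + 2\zeta xy = 0$; forcing passage through $A=(1:0:0)$, $B=(0:1:0)$, $C=(0:0:1)$ immediately kills the three squared terms, leaving $\delta yz + \epsilon zx + \zeta xy = 0$ (after absorbing the factors of $2$). Thus the circumconic of $\triangle ABC$ through any further point has the clean form
\begin{equation}
    \sum_{\text{cyc}} \delta\, yz = 0,
\end{equation}
with only three homogeneous coefficients $(\delta:\epsilon:\zeta)$ left to determine.

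Next I would impose the two remaining incidence conditions, namely that $D=(d:e:f)$ and $X=(p:q:r)$ lie on the conic. Substituting these points into $\delta yz + \epsilon zx + \zeta xy = 0$ produces a homogeneous $2\times 3$ linear system in the unknowns $(\delta,\epsilon,\zeta)$:
\begin{equation}
    \delta\, ef + \epsilon\, fd + \zeta\, de = 0, \qquad
    \delta\, qr + \epsilon\, rp + \zeta\, pq = 0.
\end{equation}
The coefficient vector is then obtained, up to scale, as the cross product of the two rows. I would compute $(\delta:\epsilon:\zeta)$ as the $2\times 2$ minors, giving $\delta \propto fd\cdot pq - de\cdot rp = p(fq - er)$ and cyclically $\epsilon \propto q(dr - fp)$, $\zeta \propto r(ep - dq)$. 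The remaining task is a purely bookkeeping matter: substituting these coefficients back, rearranging, and verifying that the resulting expression matches the claimed cyclic sum $\sum_{\text{cyc}} frxy(ep-dq)=0$ after the appropriate cyclic relabeling of the coefficient roles is chosen to align with the stated form.

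The main obstacle, if any, is purely cosmetic rather than conceptual: one must be careful that the cyclic convention used in writing the coefficients $(\delta:\epsilon:\zeta)$ is matched consistently to the cyclic summation in the target equation, since an inconsistent choice of which minor is attached to which monomial $yz$, $zx$, $xy$ will produce a superficially different—though projectively equivalent—formula. I would therefore fix the cyclic convention $(x,y,z,d,e,f,p,q,r)\mapsto(y,z,x,e,f,d,q,r,p)$ at the outset and check that each of the three terms $frxy(ep-dq)$, together with its two cyclic images, reproduces exactly the three minors computed above. A final sanity check is to verify degeneracy handling: the formula should vanish identically when $D$ and $X$ coincide or when three of the five points become collinear, confirming that the hypothesis of distinctness and general position in the statement is exactly what is needed for the equation to define a genuine conic.
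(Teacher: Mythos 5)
The paper does not actually prove this lemma --- it is recalled from Yiu's notes as a known fact --- so the only question is whether your argument stands on its own. Your strategy is the standard one and is sound: a conic through $A$, $B$, $C$ must have the circumconic form $\delta yz + \epsilon zx + \zeta xy = 0$, and the two incidence conditions for $D$ and $X$ determine $(\delta:\epsilon:\zeta)$, up to scale, as the cross product of the rows $(ef,\,fd,\,de)$ and $(qr,\,rp,\,pq)$. Up to that point everything is correct.

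The error is in your evaluation of the minors. You wrote $\delta \propto fd\cdot pq - de\cdot rp = p(fq-er)$, but in fact
\begin{equation}
fd\cdot pq - de\cdot rp \;=\; dfpq - depr \;=\; dp\,(fq-er),
\end{equation}
and likewise $\epsilon \propto eq(dr-fp)$, $\zeta \propto fr(ep-dq)$. The factors $d$, $e$, $f$ you dropped are \emph{not} a common factor of the three components, so they cannot be discarded: with your coefficients $\bigl(p(fq-er) : q(dr-fp) : r(ep-dq)\bigr)$ the conic fails to pass through $D$ in general (for instance $D=(1:2:3)$, $X=(1:1:1)$ gives $\delta\,ef + \epsilon\,fd + \zeta\,de = 2 \neq 0$), so the ``bookkeeping'' verification you deferred would in fact fail, not merely require care with cyclic conventions. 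With the corrected minors it succeeds immediately: the coefficient of $xy$ is $fr(ep-dq)$, and applying the cyclic shift $(x,y,z;\,d,e,f;\,p,q,r)\mapsto(y,z,x;\,e,f,d;\,q,r,p)$ to the term $frxy(ep-dq)$ produces exactly $dp(fq-er)\,yz$ and then $eq(dr-fp)\,zx$, so $\sum_{\text{cyc}} frxy(ep-dq)=0$ is precisely the cross-product conic. In short: right idea, but the proof as written asserts incorrect coefficients and postpones the one computation that would have exposed the slip.
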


\begin{lem}[Isogonal conjugate coordinates]\label{lem:isogonal_coords}
Let a point $P$ have homogeneous barycentric coordinates $(x:y:z)$ relative to the affine frame defined by $\triangle ABC$. Its isogonal conjugate with respect to $\triangle ABC$ is the point $Q$ given by:
\begin{equation}
    Q = \left( \frac{a^2}{x} : \frac{b^2}{y} : \frac{c^2}{z} \right),
\end{equation}
where $a,b,c$ are the lengths of sides $BC, AC, AB$ respectively.
\end{lem}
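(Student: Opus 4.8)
The plan is to work straight from the definition of the isogonal conjugate as the common point of the three cevians obtained by reflecting $AP$, $BP$, $CP$ in the internal bisectors at $A$, $B$, $C$, and to verify that the point with coordinates $(a^2/x : b^2/y : c^2/z)$ lies on each of these reflected cevians. First I would record that the cevian $AP$ meets $BC$ at its trace $(0:y:z)$, which divides the side in the ratio $BD:DC = z:y$. The central computational tool is the trigonometric cevian ratio: if a cevian from $A$ to $D \in BC$ makes angles $\beta = \angle BAD$ and $\gamma = \angle DAC$ with the two sides, then two applications of the law of sines (in $\triangle ABD$ and $\triangle ACD$), combined with $\sin C / \sin B = c/b$ from the law of sines in $\triangle ABC$, give
\begin{equation}
\frac{BD}{DC} = \frac{c\,\sin\beta}{b\,\sin\gamma}.
\end{equation}

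Next I would apply this to the reflected cevian $AD'$, whose defining angles are the swap $\angle BAD' = \gamma$, $\angle D'AC = \beta$. Multiplying the two division ratios, the sines cancel and one obtains the reflection identity
\begin{equation}
\frac{BD'}{D'C}\cdot\frac{BD}{DC} = \frac{c^2}{b^2},
\end{equation}
so that $BD':D'C = (c^2/b^2)(y/z)$, and hence the reflected trace is $D' = (0 : b^2 z : c^2 y)$. The line $AD'$ is then the set $\{(t : b^2 z : c^2 y)\}$, equivalently $c^2 y\, q = b^2 z\, r$ in coordinates $(p:q:r)$. A one-line substitution shows that the claimed point $Q = (a^2 yz : b^2 zx : c^2 xy)$, which is the homogenization of $(a^2/x : b^2/y : c^2/z)$, satisfies this relation, so $Q \in AD'$.

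Finally I would invoke the cyclic symmetry of the whole construction under $A \to B \to C \to A$, $x \to y \to z \to x$, $a \to b \to c \to a$: the identical argument places $Q$ on the reflected cevians from $B$ and from $C$ as well. Since $Q$ lies on all three reflected cevians, these are concurrent, and their common point is by definition the isogonal conjugate of $P$; this simultaneously re-establishes that the isogonal conjugate is well defined and identifies it with $(a^2/x : b^2/y : c^2/z)$. I expect the only real obstacle to be orientation bookkeeping: making the sign and angle conventions in the ratio lemma consistent, so that swapping $\beta$ and $\gamma$ produces exactly the factor $c^2/b^2$ rather than its reciprocal. The degenerate positions in which $P$ lies on a side of $\triangle ABC$ (a coordinate vanishing) need separate comment, but they are already excluded by the appearance of $a^2/x$ in the statement and can otherwise be recovered by continuity.
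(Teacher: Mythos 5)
The paper does not actually prove this lemma: it is one of two results explicitly \emph{recalled} from the literature (cited to Yiu's \emph{Introduction to the Geometry of the Triangle}) and used as a black box in the computation of $\Phi(u,v)$. So there is no internal proof to compare against, and your task was to supply the standard argument --- which you do correctly. The chain is sound: the trace of $AP$ is $(0:y:z)$ with $BD:DC = z:y$; the law of sines in $\triangle ABD$ and $\triangle ACD$ gives $\frac{BD}{DC} = \frac{c\sin\beta}{b\sin\gamma}$; swapping $\beta$ and $\gamma$ for the reflected cevian yields $\frac{BD'}{D'C}\cdot\frac{BD}{DC} = \frac{c^2}{b^2}$, hence $D' = (0:b^2z:c^2y)$; and the point $(a^2yz : b^2zx : c^2xy)$ visibly satisfies $c^2y\,q = b^2z\,r$, with the two remaining incidences following by cyclic relabeling. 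This is essentially the derivation found in the cited reference itself, and your version has the additional virtue of re-establishing the concurrency of the three reflected cevians (by exhibiting a common point) rather than presupposing that the isogonal conjugate exists. Your two caveats are the right ones: for $P$ outside the triangle the ratios and sines must be taken with signs, but the resulting linear relations in barycentric coordinates are unchanged; and the sideline cases $xyz = 0$, where the formula $a^2/x$ is meaningless but the homogenized form $(a^2yz:b^2zx:c^2xy)$ degenerates to a vertex, are excluded by the statement and irrelevant to its use in the paper (there $P$ is a point on the Euler line with an actual isogonal conjugate).
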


\begin{prop} In the affine frame of $\triangle ABC$, the equation of the conic passing through $A, B, C, D, Y_A, Y_B, Y_C, Y_D$, denoted $\Phi(u,v)$ where $(u,v)$ are the Shinagawa coefficients of $X_i$, is given by
\begin{equation}\label{eq:conic_final}
    \sum_{\text{cyc}} \frac{c^2 r xy}{uS^2+vS_{A}S_{B}} \left( \frac{b^2 p}{uS^2+vS_{A}S_{C}} - \frac{a^2 q}{uS^2+vS_{B}S_{C}} \right) = 0,
\end{equation}
where $D=(p:q:r)$ are the homogeneous barycentric coordinates of the fourth vertex.
\end{prop}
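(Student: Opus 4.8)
The plan is to exploit that a conic is determined by five of its points. Since Theorem~\ref{them1} already guarantees that all eight points lie on a single conic $\Phi(u,v)$, it suffices to write down the equation of the conic through the five points $A$, $B$, $C$, $D$, and $Y_D$; by uniqueness this must coincide with $\Phi(u,v)$, and it will automatically pass through $Y_A,Y_B,Y_C$ as well. Lemma~\ref{lem:conic_eqn} produces this equation the instant we know the barycentric coordinates of the two non-vertex points $D=(p:q:r)$ and $Y_D$. Hence the entire problem collapses to computing $Y_D$ in the frame of $\triangle ABC$, after which the conclusion is a direct substitution.

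First I would record the coordinates of $X_D$, the center with constant Shinagawa coefficients $(u,v)$ in $\triangle ABC$. By the defining relation~\eqref{eq:shinagawa} with $G\equiv u$ and $H\equiv v$, the triangle center function is $f(a,b,c)=uS^2+vS_BS_C$; applying the cyclic substitution $(a,b,c)\mapsto(b,c,a)$, under which $S$ is fixed and $S_A\to S_B\to S_C$, gives
\begin{equation}
X_D = \left( uS^2 + vS_BS_C : uS^2 + vS_CS_A : uS^2 + vS_AS_B \right).
\end{equation}
Feeding this into Lemma~\ref{lem:isogonal_coords} yields the fifth point
\begin{equation}
Y_D = \left( \frac{a^2}{uS^2+vS_BS_C} : \frac{b^2}{uS^2+vS_CS_A} : \frac{c^2}{uS^2+vS_AS_B} \right).
\end{equation}

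It then remains only to substitute $D=(p:q:r)$ and $Y_D$ into Lemma~\ref{lem:conic_eqn}. Identifying the lemma's point $(d:e:f)$ with $Y_D$ and its point $(p:q:r)$ with the vertex $D$, the coefficient of $xy$ is $fr(ep-dq)$, which is exactly $\frac{c^2 r}{uS^2+vS_AS_B}\left(\frac{b^2 p}{uS^2+vS_CS_A}-\frac{a^2 q}{uS^2+vS_BS_C}\right)$; summing cyclically reproduces the stated equation~\eqref{eq:conic_final} after the trivial identifications $S_CS_A=S_AS_C$ in the middle denominator. The only genuine obstacle is bookkeeping: the notational clash between the lemma's free coordinates $(p:q:r)$ and the proposition's use of $(p:q:r)$ for the vertex $D$, together with keeping the three quadratic factors $S_AS_B$, $S_AS_C$, $S_BS_C$ in their correct cyclic slots. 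I would finally flag the degenerate configurations — when $X_D$ falls on a side of $\triangle ABC$ (so $Y_D$ is undefined) or when three of the five points become collinear — and dispense with them by the same continuity argument invoked earlier, since such cases form a measure-zero locus.
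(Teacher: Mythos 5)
Your proposal is correct and follows essentially the same route as the paper: invoke the eight-point conic theorem to reduce to the five points $A,B,C,D,Y_D$, compute $Y_D$ via Lemma~\ref{lem:isogonal_coords} applied to the center with constant Shinagawa coefficients $(u,v)$, and substitute into Lemma~\ref{lem:conic_eqn}. Your extra steps (writing out $X_D = (uS^2+vS_BS_C : uS^2+vS_CS_A : uS^2+vS_AS_B)$ explicitly and flagging the degenerate cases) are fine refinements that the paper leaves implicit.
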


\begin{proof}
By Theorem~\ref{themG}, all eight of our points lie on a single conic, which in turn is uniquely determined by the five points $A,B,C,D,$ and $Y_D$. To find its equation, we can, without loss of generality, work within the affine frame of $\triangle ABC$. 

By Proposition~\ref{prop:shinagawa_equiv}, $Y_D$ is the isogonal conjugate of the center $X_D$ with constant Shinagawa coefficients $(u,v)$. Applying Lemma~\ref{lem:isogonal_coords}, the coordinates of $Y_D$ in the frame of $\triangle ABC$ are:
\begin{equation}
    Y_D = \left( \frac{a^2}{uS^2+vS_BS_C} : \frac{b^2}{uS^2+vS_AS_C} : \frac{c^2}{uS^2+vS_AS_B}\right).
\end{equation}
Substituting the coordinates of $D=(p:q:r)$ and $Y_D$ into the five-point conic formula (Lemma~\ref{lem:conic_eqn}) directly yields the equation for $\Phi(u,v)$ as stated.
\end{proof}

\begin{rem}
A computational verification that all eight points lie on $\Phi(u,v)$ is also possible. The procedure involves performing an affine transformation~\cite{Gallier} to reframe the coordinates of each point $Y_i$ from its native triangle into the affine frame of $\triangle ABC$. One can then confirm by direct substitution that these transformed coordinates are zeros of Equation~\eqref{eq:conic_final}.
\end{rem}

%  ---  ---  ---  ---  ---  ---  ---  ---  ---  ---  ---  ---  ---  ---  ---  ---  ---  ---  ---  ---  ---  ---  ---  --- 
\end{document}